\documentclass{amsart}
\usepackage{euscript,amsmath, amssymb, amsfonts, mathtools}
\usepackage{mathrsfs}
\usepackage{mathabx}
\usepackage{stmaryrd}
\usepackage{crossreftools}

\makeatletter
\@namedef{subjclassname@1991}{Subject}
\makeatother

\usepackage[colorlinks=true, allcolors=blue]{hyperref}

\usepackage{tikz-cd}

\numberwithin{equation}{section}
\usepackage{enumitem}   

\newtheorem{theorem}{Theorem}[section]
\newtheorem{lemma}[theorem]{Lemma}
\newtheorem{proposition}[theorem]{Proposition}
\newtheorem{corollary}[theorem]{Corollary}

\theoremstyle{definition}
\newtheorem{definition}[theorem]{Definition}

\theoremstyle{remark}
\newtheorem*{remark}{Remark}

\usepackage{graphicx}  
\usepackage{icomma}
\usepackage{bbm}
\usepackage{tikz}
\usetikzlibrary{shapes}
\usepackage{color}
\usepackage{algorithmic}
\usepackage{verbatim}
\usepackage{xurl}
\usepackage{indentfirst}
\usepackage{lipsum}
\usepackage[backend = biber]{biblatex}
\usepackage{placeins}
\usepackage{bigints}
\usepackage{subcaption}
\usepackage{appendix}

\DeclareMathOperator{\Jac}{Jac}

\DeclareMathOperator{\CC}{\mathbb C}

\DeclareMathOperator{\ZZ}{\mathbb Z}

\DeclareMathOperator{\PP}{\mathbb P}

\DeclareMathOperator{\Per}{Per}

\DeclareMathOperator{\Res}{\mathcal R}
\DeclareMathOperator{\Discr}{\mathcal D}

\usepackage{csquotes}

\begin{document}
	\title[An explicit expression of the Richelot isogeny]{An explicit expression of the Richelot isogeny through Kleinian hyperellyptic functions}
	
	\author{Matvey Smirnov}
	\address{119991 Russia, Moscow GSP-1, ul. Gubkina 8,
		Institute for Numerical Mathematics,
		Russian Academy of Sciences}
	\email{matsmir98@gmail.com}

	\begin{abstract}
		We find an explicit expression for the Richelot isogeny of Kummer surfaces of genus 2 curves in terms of Kleinian hyperelliptic functions of weight 2. We use this expression to relate Kleinian hyperelliptic functions associated to Richelot isogenous curves.
		
		\smallskip
		\noindent \textbf{Keywords.} Kleinian hyperelliptic functions, Richelot isogeny, Kummer surface.
	\end{abstract}
	\subjclass{32A08, 32A10}
	\maketitle
\section{Introduction}
	
	This paper aims to derive a part of an algorithm that computes Kleinian hyperelliptic functions of genus 2 associated with a given curve $C$. We refer to~\cite{KleinianWeight2} for a more thorough discussion of how to construct such an algorithm. Briefly, we aim to imitate for curves of genus 2 the scheme of the classical Landen's algorithm. That is, we consider a sequence of curves $C_1 = C,C_2, \dots, C_n$ of genus $2$ with a given isogeny $\Jac(C_{k+1}) \to \Jac(C_n)$. If we are able to express the Kleinian hyperelliptic functions associated with $C_k$ through the same set of functions associated with $C_{k+1}$, then we can recursively reduce the problem to the curve $C_n$. If we appropriately choose isogenous curves, then we can hope that Kleinian functions associated with $C_n$ can be approximated via some other method.
	
	For the construction of the isogenous curves we propose to use the well-known construction due to Richelot (see, e.g.~\cite{Humbert} and~\cite{BostMestre}). Note that there is a choice of (in general) 15 distinct possibilities for $C_{k+1}$ on each step. The problem of the choice of $C_{k+1}$ on each step and the approximation of the resulting Kleinian functions will be the subject of a forthcoming paper. In this paper we solve the problem of expressing Kleinian functions associated with a given curve $C$ through the Kleinian functions associated with a Richelot isogenous curve $\hat C$. 
	
	For our purposes it is customary to find the mentioned expression for Kleinian fuctions of weight 2 introduced in~\cite{KleinianWeight2}, instead of classical Kleinian functions~\cite{Buch}. One of the reasons for that is the fact that the definition of these functions does not require any conditions on the algebraic equation of the curve (in contrast to $\sigma$-function, which is usually defined only for hyperelliptic curves with Weierstrass point at infinity). Moreover, since the role of Kleinian functions of weight $2$ is similar to that of theta-functions of weight $2$, they define an embedding of the Kummer surface of a genus $2$ curve into $\CC\PP(3)$. This allows us to find an equation that connects Kleinian functions of weight $2$ for Richelot isogenous curves just by finding an explicit formula for the Richelot isogeny of Kummer surfaces in the coordinates defined by Kleinian functions of weight $2$. Similar formula (in purely algebraic setting) was obtained in~\cite[Section~9]{CasselsFlynn}. Our approach differs from that in~\cite{CasselsFlynn} as we heavily rely on transcendental methods (and as a final result we obtain a relation for transcendental functions -- Kleinian functions of weight $2$).
	
	The paper is organized as follows. In Section~\ref{secPrelim} we collect all necessary known facts about curves of genus 2 and Kleinian functions, Richelot isogenies, and Kummer surfaces. In Section~\ref{secRichelotIsogeny} we derive a general form of the Richelot isogeny for Kummer surfaces and collect several facts about it, so we are able to calculate all parts of the formula in Section~\ref{secCalculations}. 
	
	\section{Preliminaries}\label{secPrelim}
	In this section we collect all necessary known facts that we shall use to derive the explicit formula for the Richelot isogeny. In particular, we recall the notation for special functions introduced in~\cite{KleinianWeight2}, several facts about Richelot isogenies (we also include some proofs for the facts that are not covered in literature in the form that we shall use), and nomenclature of nodes and tropes of Kummer surfaces.
	\subsection{Curves of genus 2 and Kleinian hyperelliptic functions} As in~\cite{KleinianWeight2} we shall denote the space of complex polynomials of degree $\le k$ by $\mathfrak P_k$. Given a polynomial $f$ we denote by $f_j$ the coefficient of $x^j$ in $f$. Moreover, we call a polynomial $f \in \mathfrak P_6$ {\it{admissible}} if it does not have multiple roots and $\deg f$ is equal to either $5$, or $6$. With an admissible polynomial we associate a genus 2 curve $\mathcal X_f$ (whose affine part is defined by the equation $y^2 = f(x)$). On this curve we fix the $1$-forms
	\[
		\omega_1^f = \frac{dx}{y},\;\;\omega_2^f = \frac{xdx}{y},\;\;r_1^f = \frac{f_3x + 2f_4x^2 + 3f_5x^3 + 4f_6x^4}{4y}dx,\;\;r_2^f = \frac{f_5x^2 + 2f_6x^3}{4y}dx.
	\]
	The forms $\omega_1^f, \omega_2^f$ constitute a basis of the space of holomorphic $1$-forms on $\mathcal X_f$. We denote the lattice of periods of forms $\omega_1^f, \omega_2^f$ by $\Per_f \subset \CC^2$. This lattice is canonically equipped with the intersection pairing from $H_1(\mathcal X_f, \ZZ)$. Moreover, given a period $w \in \Per_f$ we define $\eta^f(w) \in \CC^2$ by the rule
	\begin{equation*}
		\eta^f(w) = -\int_\gamma \begin{pmatrix} r_1^f \\ r_2^f\end{pmatrix},\text{ where } w = \int_\gamma \begin{pmatrix} \omega_1^f \\ \omega_2^f \end{pmatrix}.
	\end{equation*}
	The periods $\eta^f$ are well-defined, since $\rho_1^f$ and $\rho_2^f$ are of the second kind. We shall use several times the equalities
	\begin{equation}\label{eqLegendre}
		\begin{gathered}
			\eta_A^T B - A^T \eta_B = B \eta_A^T - A\eta_B^T= 2i\pi I, \\
			\left(\eta_A \eta_B^T\right)^T = \eta_A\eta_B^T,\;\; (\eta_A^TA)^T = \eta_A^TA,\;\;(\eta_B^TB)^T = \eta_B^TB,
		\end{gathered}
	\end{equation}
	which constitute the analog of the famous Legendre's identities (see~\cite[eq.~(3.4)]{KleinianWeight2}).
	
	By $\Jac_f$ we denote the quotient $\CC^2/\Per_f$, which is clearly isomoprhic to the Jacobian variety of $\mathcal X_f$. Given a degree $2$ divisor $D$ we denote by $\mathscr A_f(D)$ the image of the divisor class $D - \mathfrak L_f$ in $\Jac_f$ under the Abel map, where $\mathfrak L_f$ is the canonical hyperelliptic divisor class on $\mathcal X_f$. We canonically identify the symmetric square $\mathcal X_f^{(2)}$ with the variety of effective degree $2$ divisors on $\mathcal X_f$, so $\mathcal A_f:\mathcal X_f^{(2)} \to \Jac_f$ is a bimeromorphic map.
	
	Finally, we recall the definition of special functions discussed in~\cite{KleinianWeight2}. The functions $\xi^f_{11}$, $\xi_{12}^f$, and $\xi_{22}^f$ on the surface $\mathcal X_f^{(2)}$ are defined by the formulas
	\begin{equation}\label{eqXiDef}
		\xi^f_{22}(D) = x_1 + x_2,\;\;\xi^f_{12}(D) = -x_1x_2,\;\;\xi^f_{11}(D) = \frac{F_f(x_1,x_2) - 2y_1y_2}{4(x_1 - x_2)^2},
	\end{equation}
	where $D = (x_1,y_1) + (x_2,y_2) \in \mathcal X_f^{(2)}$ and 
	\begin{equation}\label{eqFdef}
		F_f(a,b) = \\ 2f_0 + f_1(a + b) + 2f_2ab + f_3ab(a + b) + 2f_4a^2b^2 + f_5a^2b^2(a + b) + 2f_6a^3b^3.
	\end{equation}
	Using the fact that $\mathcal A_f$ is bimeromorphic it is possible to introduce meromorphic functions $\wp^f_{11}$, $\wp^f_{12}$, and $\wp^f_{22}$ on $\Jac_f$ that satisfy the equations $\wp^f_{jk}(\mathscr A_f(D)) = \xi^f_{jk}(D)$ for a generic $D \in \mathcal X_f^{(2)}$. Moreover, we recall~\cite{KleinianWeight2} the definition of the space $\mathfrak S_f$ of {\it{Kleinian hyperelliptic functions of weight 2}}. By definition it consists of all holomorphic functions $\phi$ on $\mathbb C^2$ that satisfy the property
	\begin{equation}\label{eqSpaceDef}
		\phi(z + w) = \exp\left[2\eta^f(w)^T\left(z + \frac{w}{2}\right)\right]\phi(z)
	\end{equation}
	for all $z \in \CC^2$ and $w \in \Per_f$. It is proved (see~\cite[Corollary~3.5]{KleinianWeight2}) that this space is spanned by the four functions $S^f$, $S^f_{11}$, $S^f_{12}$, $S^f_{22}$, where $S^f$ is the unique element of $\mathfrak S_f$ that satisfies $S^f(z) = z_1^2 + \bar{o}(z^2)$, and $S_{jk}^f = S^f \wp_{jk}^f$. 
	
	In this paper we shall use the order 2 Taylor expansions
	\begin{equation}\label{eqTaylorExpansions}
		S_{22}^f(z) = 2z_1z_2 + \bar{o}(z^2),\;\;
		S_{12}^f(z) = -z_2^2 + \bar{o}(z^2),\;\;
		S_{11}^f(z) = 1 + \bar{o}(z^2)
	\end{equation}
	of functions $S_{jk}^f$ (see~\cite[Theorem~4.1]{KleinianWeight2}), and the fact that the space $\mathfrak S_f$ is canonically isomorphic to the space of theta-functions of weight 2. That is (see~\cite[Definition~II.1.2]{mumfordI}), given a Riemann matrix $\Omega$ we consider the space $R_2^\Omega$ that consists of all holomorphic functions $\phi$ on $\CC^2$ such that 
	\[
	\phi(z + n + \Omega m) = \exp\left[-2i\pi m^T \Omega m -4i\pi m^T z\right]\phi(z)
	\]
	for all $z \in \CC^2$ and all $m,n \in \ZZ^2$. If $a_1,a_2,b_1,b_2 \in \Per_f$ is a symplectic basis, then we can consider matrices
	\[
		A = \begin{pmatrix}
			a_1 & a_2
		\end{pmatrix},\;\;
		B = \begin{pmatrix}
			b_1 & b_2
		\end{pmatrix},\;\;
		\eta_A = \begin{pmatrix}
			\eta^f(a_1) & \eta^f(a_2)
		\end{pmatrix},\;\;
		\eta_B = \begin{pmatrix}
			\eta^f(b_1) & \eta^f(b_2)
		\end{pmatrix}.
	\]
	Then the matrix $\Omega = A^{-1}B$ is a Riemann matrix and the transformation $T$ defined by the rule
	\begin{equation}\label{eqTisomorphism}
		T(\phi)(z) = \exp\left[z^T (\eta_AA^{-1})z\right]\phi(A^{-1}z)
	\end{equation}
	is an isomorphism of the space $R^\Omega_2$ onto $\mathfrak S_f$ (see~\cite[Proposition~3.1]{KleinianWeight2}).
	\subsection{Richelot construction}
	Let us recall the definition and properties of the Richelot isogeny. The construction we present below is essentially due to Humbert~\cite{Humbert}. An elaborated exposition can be found in Smith~\cite[Chapter~8]{Smith}. 
	
	At first we establish several constructions with polynomials of degree $\le 2$. If $p,q \in \mathfrak P_2$, then the polynomial $[p,q] = p'q - q'p$ also belongs to $\mathfrak P_2$. It can be verified that the operation $[\cdot,\cdot]$ turns the space $\mathfrak P_2$ into a Lie algebra. We use symbols $\Res$ and $\Discr$ to denote the {\it{resultant}} and the {\it{discriminant}} of elements of $\mathfrak P_2$ respectively, that is
	\begin{equation}\label{eqDiscrAndResDef}
		\Discr(p) = p_1^2 - 4p_0 p_2,\;\; \Res(p,q) = (p_2 q_0 - p_0 q_2)^2 + (p_2 q_1 - p_1 q_2)(p_0 q_1 - p_1 q_0).
	\end{equation}
	Note that we apply the formulas~\eqref{eqDiscrAndResDef} for discriminant and resultant to all elements of $\mathfrak P_2$, even if the degree of the polynomials is less than $2$. For polynomials $p(x) = a(x - t_1)(x - t_2)$ and $q = b(x - s_1)(x - s_2)$ of degree exactly $2$ it is known that
	\begin{equation}\label{eqDiscrAndResRootFormula}
		\Discr(p) = a^2(t_1 - t_2)^2,\;\;\Res(p,q) = a^2b^2(t_1 - s_1)(t_1 - s_2)(t_2 - s_1)(t_2 - s_2).
	\end{equation}
	Finally, it will be customary to us to consider point at infinity in the Riemann sphere $\CC\PP(1)$ to be a simple root (resp. double root) of non-zero polynomial $p \in \mathfrak P_2$, if $\deg p = 1$ (resp. $\deg p = 0$). With this convention all non-zero elements in $\mathfrak P_2$ have exactly two roots counting multiplicity. 
	
	Below we state the basic properties of the introduced constructions.
	
	\begin{proposition}\label{propBasicBasic}
		The following statements hold for all non-zero $p,q \in \mathfrak P_2$.
		\begin{enumerate}[label=(\roman*)]
			\item\label{BasicBasici} $\Discr(p) = 0$ if and only if $p$ has only one root (of multiplicity two).
			\item\label{BasicBasicii} $\Res(p,q) = 0$ if and only if $p$ and $q$ have a common root.
			\item\label{BasicBasiciii} $[p,q] = 0$ if and only if $p$ and $q$ are proportional.
			\item\label{BasicBasiciv} $\Discr([p,q]) = 4\Res(p,q)$.
			\item\label{BasicBasicv} Let $S$ be a M\"obius transformation, i.e.
			\[
			S(x) = \frac{ax + b}{cx + d}, \;\;\det \begin{pmatrix} a & b \\ c & d \end{pmatrix} = 1.
			\]
			Then $[(cx + d)^2p \circ S, (cx + d)^2q \circ S] = (cx + d)^2[p,q] \circ S$.
		\end{enumerate}
	\end{proposition}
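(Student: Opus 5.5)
The natural route is to prove \ref{BasicBasicv} and \ref{BasicBasiciv} first, and then use them to reduce the remaining statements to a few normal forms. All identities here are polynomial in the coefficients of $p,q$, so direct coefficient computations are always available as a fallback.

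For \ref{BasicBasicv}, work with homogeneous forms. Identify $p\in\mathfrak P_2$ with the binary quadratic form $P(x,z)=p_2x^2+p_1xz+p_0z^2$. Then $[p,q]$ corresponds, up to the constant factor $\tfrac12$ or sign, to the Jacobian $P_xQ_z-P_zQ_x$. The operation $p\mapsto (cx+d)^2\,p\circ S$ is just the substitution of the linear change of variables with matrix $\begin{pmatrix} a&b\\ c&d\end{pmatrix}$ into $P$. The Jacobian of two forms transforms by the determinant of this linear substitution, and the determinant equals $1$. Because the correspondence between $[p,q]$ and the Jacobian is a fixed constant, this yields \ref{BasicBasicv} directly; the constant can be checked on $p=x$, $q=1$. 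Alternatively, one can verify \ref{BasicBasicv} on the generators $x\mapsto x+b$, $x\mapsto \lambda^2 x$ and $x\mapsto -1/x$ of the group of Möbius transformations, each by a two-line computation.

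For \ref{BasicBasiciv}, expand directly:
\[
[p,q]=(p_2q_1-p_1q_2)x^2+2(p_2q_0-p_0q_2)x+(p_1q_0-p_0q_1).
\]
Its discriminant is $4(p_2q_0-p_0q_2)^2-4(p_2q_1-p_1q_2)(p_1q_0-p_0q_1)$. This equals $4\Res(p,q)$ after flipping the sign in the last factor, by \eqref{eqDiscrAndResDef}.

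Items \ref{BasicBasici}–\ref{BasicBasiciii} then follow from \eqref{eqDiscrAndResRootFormula} together with projective invariance under Möbius maps.

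\begin{enumerate}[label=(\roman*)]
\item The form $\Discr$ is the discriminant of the binary form $P$, which is invariant under $SL_2$. We may therefore move roots away from $\infty$ with a Möbius map, so that $\deg p=2$, and apply \eqref{eqDiscrAndResRootFormula}. The same conclusion is also visible directly in the degenerate cases: when $\deg p=1$ we get $\Discr=p_1^2\neq0$, with distinct roots $-p_0/p_1$ and $\infty$; when $\deg p=0$ we get $\Discr=0$, matching the double root at $\infty$.
\item The resultant is likewise $SL_2$-invariant. Applying \ref{BasicBasicv} and \ref{BasicBasiciv} shows $\Res$ is preserved, since $\Discr$ is. We then choose $S$ with no root of $pq$ mapped to $\infty$ and use \eqref{eqDiscrAndResRootFormula}.
\item If $p$ and $q$ are proportional, then clearly $[p,q]=0$. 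Conversely, $p'q=q'p$ gives $(q/p)'=0$ wherever $p\ne0$, so $q/p$ is constant.
\end{enumerate}

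The main obstacle is keeping the convention about roots at $\infty$ consistent. This requires verifying that $\Discr$ and $\Res$ are really invariant under $p\mapsto(cx+d)^2p\circ S$, so that the reduction to honest degree-two polynomials is legitimate. I would handle this via the homogeneous-form viewpoint, where root multiplicities at $\infty$ are exactly the multiplicities of the factor $z$ in $P(x,z)$.
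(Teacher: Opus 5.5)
Your proof is correct, but it is organized differently from the paper's.

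The paper treats (iii)--(v) as direct coefficient checks. It proves (i) and (ii) from \eqref{eqDiscrAndResRootFormula} when $\deg p=\deg q=2$, and settles the cases with a root at $\infty$ by separate hand computations.

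You instead homogenize. Statement (v) becomes the covariance $J(P\circ L,Q\circ L)=\det L\cdot J(P,Q)\circ L$ of the Jacobian $J=P_xQ_z-P_zQ_x$, which is twice the homogenization of $[p,q]$. The convention on roots at $\infty$ becomes the multiplicity of the factor $z$ in $P(x,z)$. Writing $\tilde g=(cx+d)^2\,g\circ S$, the $SL_2$-invariance of $\Discr$ combined with (iv) and (v) gives $\Res(\tilde p,\tilde q)=\tfrac14\Discr([\tilde p,\tilde q])=\tfrac14\Discr\bigl(\widetilde{[p,q]}\bigr)=\tfrac14\Discr([p,q])=\Res(p,q)$. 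A suitable $S$ then moves every root off $\infty$, so all degenerate cases of (i) and (ii) reduce to \eqref{eqDiscrAndResRootFormula}.

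Your route gives a uniform argument and explains why the paper's convention about roots at infinity is the natural one. It replaces the case analysis, which for (ii) involves several combinations of degrees. The paper's route is shorter and purely mechanical.

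Two small details need fixing. First, the identity $J=2\cdot(\text{homogenized }[p,q])$ needs a coefficientwise check. A single example such as $p=x$, $q=1$ only fixes the constant once proportionality is already known. Second, the roots of $\tilde p$ are the $S^{-1}$-images of those of $p$. So the condition to impose in (ii) is that $S(\infty)$ is not a root of $pq$, not that $S$ maps no root to $\infty$.
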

	\begin{proof}
		The statements~\ref{BasicBasiciii},~\ref{BasicBasiciv}, and~\ref{BasicBasicv} can be verified by a simple calculation. The statements~\ref{BasicBasici} and~\ref{BasicBasicii} follow immediately from~\eqref{eqDiscrAndResRootFormula}, if $\deg p = \deg q = 2$. For the remaining cases the proof is an easy calculation.
	\end{proof}

	Now we define yet another function $\Delta$ of three arguments $p,q,r \in \mathfrak P_2$. That is, $\Delta(p,q,r)$ is the determinant of the matrix
	\[
	\begin{pmatrix}
		p_0 & q_0 & r_0 \\
		p_1 & q_1 & r_1 \\
		p_2 & q_2 & r_2
	\end{pmatrix},
	\]
	where $p(x) = p_0 + p_1x + p_2x^2$, etc.
	
	\begin{proposition}\label{propBasicProperties}
		Assume that $p,q,r \in \mathfrak P_2$ and let $\hat p = [q,r]$, $\hat q = [r,p]$, and $\hat r = [p,q]$. Then the following statements hold.
		\begin{enumerate}[label=(\roman*)]
			\item\label{BasicPropertiesi} $[\hat p, \hat q] = -2\Delta(p,q,r)r$, $[\hat q, \hat r] = -2\Delta(p,q,r)p$, and $[\hat r, \hat p] = -2\Delta(p,q,r)q$.
			\item\label{BasicPropertiesii} $\Delta(\hat p, \hat q, \hat r) = -2\Delta(p,q,r)^2$. 
			\item\label{BasicPropertiesiii} $\Res(\hat p, \hat q) = \Delta(p,q,r)^2\Discr(r)$, $\Res(\hat p, \hat r) = \Delta(p,q,r)^2\Discr(q)$, and $\Res(\hat q, \hat r) = \Delta(p,q,r)^2\Discr(p)$.
			\item\label{BasicPropertiesiv} Let $S$ be a M\"obius transformation as in Proposition~\ref{propBasicBasic}~\ref{BasicBasicv}. Then
			\[
			\Delta( (cx + d)^2p \circ S, (cx + d)^2q \circ S, (cx + d)^2r \circ S) = \Delta(p,q,r).     
			\]
		\end{enumerate}
	\end{proposition}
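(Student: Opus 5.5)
The plan is to recognise the bracket $[\cdot,\cdot]$ as a twisted vector cross product on $\CC^3\cong\mathfrak P_2$. Then \ref{BasicPropertiesi} and \ref{BasicPropertiesii} become classical identities for cross products, \ref{BasicPropertiesiii} follows from \ref{BasicPropertiesi} together with Proposition~\ref{propBasicBasic}~\ref{BasicBasiciv}, and \ref{BasicPropertiesiv} is a determinant computation for the symmetric square representation of $SL_2$.

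\emph{Step 1: the bracket as a cross product.} Identify $p\in\mathfrak P_2$ with the column $(p_0,p_1,p_2)^T$. Expanding $p'q-q'p$ gives
\[
[p,q] = (p_1q_0-p_0q_1) + 2(p_2q_0-p_0q_2)x + (p_2q_1-p_1q_2)x^2 .
\]
Comparing with $p\times q=(p_1q_2-p_2q_1,\;p_2q_0-p_0q_2,\;p_0q_1-p_1q_0)$, this says $[p,q]=M(p\times q)$, where $M$ is the symmetric matrix with entries $M_{02}=M_{20}=-1$, $M_{11}=2$ and zeros elsewhere. In particular $\det M=-2$. Moreover $\Delta(p,q,r)=\det(p,q,r)=p\cdot(q\times r)$.

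\emph{Step 2: identities \ref{BasicPropertiesi} and \ref{BasicPropertiesii}.} I will use two standard facts:
\begin{itemize}
\item $(Ma)\times(Mb)=\operatorname{cof}(M)(a\times b)=\det(M)\,M^{-T}(a\times b)$;
\item $(q\times r)\times(r\times p)=\det(q,r,p)\,r=\Delta(p,q,r)\,r$.
\end{itemize}
Put $u=q\times r$ and $v=r\times p$. Then
\[
[\hat p,\hat q]=M\bigl((Mu)\times(Mv)\bigr)=-2\,MM^{-T}(u\times v)=-2\Delta(p,q,r)\,r,
\]
where the last equality uses that $M$ is symmetric, so $MM^{-T}=I$. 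The other two formulas follow by cyclic permutation of $(p,q,r)$, which leaves $\Delta$ unchanged.

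For \ref{BasicPropertiesii}, write $\Delta(\hat p,\hat q,\hat r)=\det\bigl(M\,(q\times r,\;r\times p,\;p\times q)\bigr)$. Since $\det M=-2$ and the determinant of the three cross products equals $\det(p,q,r)^2$ (the adjugate identity), this gives $-2\Delta(p,q,r)^2$.

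\emph{Step 3: identity \ref{BasicPropertiesiii}.} By Proposition~\ref{propBasicBasic}~\ref{BasicBasiciv}, $\Res(\hat p,\hat q)=\tfrac14\Discr([\hat p,\hat q])$. By \ref{BasicPropertiesi} this equals $\tfrac14\Discr(-2\Delta r)$. Since $\Discr$ is homogeneous of degree $2$, we get $\Res(\hat p,\hat q)=\Delta^2\Discr(r)$.

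The remaining two cases work the same way. For example, $\Res(\hat p,\hat r)=\tfrac14\Discr([\hat p,\hat r])$ and $[\hat p,\hat r]=2\Delta q$. The sign is irrelevant because $\Discr$ is quadratic.

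\emph{Step 4: identity \ref{BasicPropertiesiv}.} The map $L_S\colon p\mapsto (cx+d)^2\,p\circ S$ is linear on $\mathfrak P_2$, and $\Delta(L_Sp,L_Sq,L_Sr)=\det(L_S)\,\Delta(p,q,r)$. So it suffices to show $\det L_S=1$. $L_S$ is the symmetric square of the $2\times2$ matrix of $S$, and its determinant is $(ad-bc)^3=1$.

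To verify this I would either compute $L_S$ on the monomials $1,x,x^2$ directly, or reduce to generators of $SL_2$: translations and scalings give triangular matrices with determinant $1$, and the inversion $x\mapsto -1/x$ gives an antidiagonal matrix whose determinant is checked directly.

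\emph{Main obstacle.} The only real obstacle is bookkeeping: signs and the factor $2$ in $M$. The key check is that $M$ is symmetric with $\det M=-2$, since exactly this produces the constants $-2$ in \ref{BasicPropertiesi} and \ref{BasicPropertiesii}.
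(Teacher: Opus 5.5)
Your proof is correct, but it takes a different, more structural route than the paper. The paper's proof of this proposition is only ``direct verification'', i.e.\ expanding each identity in the coefficients $p_j,q_j,r_j$, the kind of check it elsewhere delegates to Maple.

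You instead isolate one explicit computation: $[p,q]=M(p\times q)$ with $M$ symmetric and $\det M=-2$, both of which are right. From it you obtain:
\begin{enumerate}[label=(\roman*)]
\item from the cofactor identity $(Ma)\times(Mb)=\det(M)M^{-T}(a\times b)$ together with $(q\times r)\times(r\times p)=\Delta(p,q,r)\,r$;
\item from $\det M$ and $\det\operatorname{adj}(P)=\det(P)^2$;
\item as a corollary of (i) and Proposition~\ref{propBasicBasic}~\ref{BasicBasiciv}, via homogeneity of $\Discr$;
\item from $\det\operatorname{Sym}^2(g)=(\det g)^3$; your matrix on $1,x,x^2$ does have determinant $(ad-bc)^3$.
\end{enumerate}

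Your approach explains the identities rather than just confirming them, and it is much less error-prone by hand. It shows that the constant $-2$ in (i) and (ii) is exactly $\det M$, since symmetry of $M$ gives $MM^{-T}=I$. It shows that (iii) carries no information beyond (i). It also shows that (iv) is just $SL_2$-invariance of the determinant on $\mathfrak P_2\cong\operatorname{Sym}^2\CC^2$. The paper's brute-force check needs no ideas and is trivially machine-verifiable, but gives no insight into why the identities hold.

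Two minor remarks:
\begin{itemize}
\item Proposition~\ref{propBasicBasic} is stated for non-zero arguments. Its item~\ref{BasicBasiciv} is, however, a polynomial identity, and both sides vanish if $\hat p$ or $\hat q$ is zero. So your use of it in (iii) is valid for all $p,q,r$.
\item If you verify (iv) via generators of $SL_2$, note that $S\mapsto L_S$ is an anti-homomorphism: $L_SL_T=L_{T\circ S}$. That is enough, since determinants are multiplicative.
\end{itemize}
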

	\begin{proof}
		Direct verification.
	\end{proof}
	
	Let $f$ be an admissible polynomial and assume that $f = pqr$ for some $p,q,r \in \mathfrak P_2$. If $\Delta(p,q,r) \ne 0$, then we can consider the polynomial 
	\begin{equation}\label{eqHatFDef}
		\hat {f} = \frac{1}{4\Delta(p,q,r)}[q,r] [r,p] [p,q].
	\end{equation}
	It is clear from Proposition~\ref{propBasicBasic}~\ref{BasicBasiciv} and Proposition~\ref{propBasicProperties}~\ref{BasicPropertiesiii} that this polynomial is admissible. We shall refer to $\hat f$ as the result of the {\it{Richelot construction associated with the decomposition}} $f = pqr$. It appears that there is a remarkable relation between curves $\mathcal X_f$ and $X_{\hat{f}}$, which we describe in the following proposition.
	
	\begin{proposition}\label{propRichProperties}
		Let $f$ and $\hat{f}$ be as above. Then the following statements hold.
		\begin{enumerate}[label=(\roman*)]
			\item\label{Richi} $2\Per_f \subset \Per_{\hat f} \subset \Per_{f}$ and for all $w_1,w_2 \in \Per_{\hat f}$ the equality $\langle w_1, w_2 \rangle_{f} = 2\langle w_1, w_2\rangle_{\hat f}$ holds.
			\item\label{Richii} Assume that $a_1,a_2,b_1,b_2 \in \Per_f$ is a symplectic basis such that $a_1,a_2\in \Per_{\hat f}$. Then $a_1,a_2,2b_1,2b_2$ is a symplectic basis in $\Per_{\hat f}$. Moreover, such basis always exists.
			\item\label{Richiii} Let $P_1, P_2, Q_1, Q_2, R_1, R_2$ be Weierstrass points of the curve $\mathcal X_f$ listed in such order that $x$-coordinates of $P_1$ and $P_2$ are the roots of $p$, etc. Then the kernel of the isogeny $\Jac_f \to \Jac_{\hat f}$ induced by the multiplication by $2$ operator on $\CC^2$ consists of the elements \[0,\;\; \mathscr A_f((P_1) + (P_2)),\;\; \mathscr A_f((Q_1) + (Q_2)),\;\;\mathscr A_f((R_1) + (R_2)).\]
			\item\label{Richiv} Let $\hat P_1, \hat P_2, \hat Q_1, \hat Q_2, \hat R_1, \hat R_2$ be Weierstrass points of the curve $\mathcal X_{\hat f}$ listed in such order that $x$-coordinates of $\hat P_1$ and $\hat P_2$ are the roots of $[q,r]$, etc. Then the kernel of the isogeny $\Jac_{\hat f} \to \Jac_{f}$ induced by the identity operator on $\CC^2$ consists of the elements \[0,\;\; \mathscr A_{\hat f}((\hat P_1) + (\hat P_2)),\;\; \mathscr A_{\hat f}((\hat P_3) + (\hat P_4)),\;\;\mathscr A_{\hat f}((\hat P_5) + (\hat P_6)).\]
		\end{enumerate}
	\end{proposition}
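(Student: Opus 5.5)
The approach is the classical one: construct an explicit $(2,2)$-correspondence between $\mathcal X_f$ and $\mathcal X_{\hat f}$ and compare pullbacks of holomorphic differentials. Following Humbert and Bost--Mestre, I would introduce the curve $Z \subset \mathcal X_f \times \mathcal X_{\hat f}$ defined by
\[
p(x)\hat p(\hat x)... \quad\text{more precisely}\quad p(x)\,[q,r](\hat x)\text{-type bilinear relations: } \; \sum p\text{-coefficients}\ldots
\]
namely the standard system $p(x)\hat p(z)$-free form: $q(x)[r,p](z)\cdot\ldots$. Concretely I would use the relations $p_0 + p_1\frac{x+z}{2} + p_2 xz$ type: the correspondence is cut out by $P(x,z)=0$ where $P(x,z) = p(x)\,\hat p'(z)$-style symmetric bilinear forms, together with $y\hat y = p(x)\,\hat p(z)\cdot(x-z)/\text{const}$. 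The first step is to check, using Proposition~\ref{propBasicProperties}\ref{BasicPropertiesi}--\ref{BasicPropertiesii}, that this is a well-defined $2$--$2$ correspondence and that, by Proposition~\ref{propBasicBasic}\ref{BasicBasicv} and Proposition~\ref{propBasicProperties}\ref{BasicPropertiesiv}, everything is M\"obius-equivariant, so we may normalize (e.g.\ send a root to $\infty$) when convenient.

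Second, compute the pullback: show that for the correspondence $\Phi$, $\Phi^*\omega^{\hat f}_j$ equals an explicit linear combination of $\omega^f_1,\omega^f_2$; with the normalization $1/(4\Delta)$ in \eqref{eqHatFDef} I expect it to be the identity (up to sign choice of $\hat y$). This is a direct differential identity $\frac{dx}{y}+\ldots=\frac{dz}{\hat y}$ obtained by differentiating the defining equation. Consequently the induced map on $\CC^2$ is the identity and sends $\Per_{\hat f}$ into $\Per_f$; the transposed correspondence induces multiplication by $2$ on periods, since the composition of the correspondence with its transpose is $2\cdot\mathrm{id}$ plus hyperelliptic-class terms. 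This gives $2\Per_f\subset\Per_{\hat f}\subset\Per_f$. The intersection relation in \ref{Richi} follows because for an isogeny of degree $4$ between principally polarized surfaces compatible with polarizations, the pullback of the polarization is twice the polarization; alternatively, index count $[\Per_f:\Per_{\hat f}]=4$ plus compatibility of Riemann forms computed via the correspondence.

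For \ref{Richiii}, the kernel of $\Jac_f\to\Jac_{\hat f}$ (multiplication by 2 on $\CC^2$) is $\Per_{\hat f}/2\Per_f$ viewed as $\tfrac12\Per_{\hat f}/\Per_f$, a maximal isotropic subgroup of $\Jac_f[2]$; I would identify it by checking that $\mathscr A_f(P_1+P_2)$ maps to zero: the points above the roots of $p$ correspond under $\Phi$ to a divisor linearly equivalent to the hyperelliptic class (both $P_i$ map to pairs with $\hat x$ a root of the same polynomial), and since kernel has order $4$, the three listed classes (nonzero and distinct, summing to zero) exhaust it. \ref{Richiv} is symmetric, using that $\hat{\hat f}$ is proportional to $f$ by Proposition~\ref{propBasicProperties}\ref{BasicPropertiesi},\ref{BasicPropertiesii}. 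Part \ref{Richii} is linear algebra: with $a_1,a_2\in\Per_{\hat f}$ and index $4$, $\Per_{\hat f}=\langle a_1,a_2,2b_1,2b_2\rangle$ since the isotropic $2$-torsion kernel corresponds to a Lagrangian in $\Per_f/2\Per_f$; existence follows by choosing a symplectic basis adapted to a given Lagrangian mod 2 (symplectic group surjects onto $Sp_4(\mathbb F_2)$). Pairings then follow from \ref{Richi}.

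The main obstacle is the explicit differential identity for the pullback with the precise constant, which requires a careful (and error-prone) computation; I would do it after a M\"obius normalization making $p=x$ or $p$ monic with a root at infinity.
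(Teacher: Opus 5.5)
\textbf{Main gap: the pairing identity in (i).} You never actually prove $\langle w_1,w_2\rangle_f=2\langle w_1,w_2\rangle_{\hat f}$.

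Your first justification, ``an isogeny of degree $4$ \ldots\ compatible with polarizations'', presupposes exactly this compatibility. At that point you have established neither the degree $4$ nor any relation between the two Riemann forms. The inclusions $2\Per_f\subset\Per_{\hat f}\subset\Per_f$ alone do not fix the index. Even knowing that $\Per_{\hat f}/2\Per_f$ is Lagrangian would only tell you that $\tfrac12\langle\cdot,\cdot\rangle_f$ restricts to \emph{some} principal polarization on $\CC^2/\Per_{\hat f}$, not that it is the canonical one of $\mathcal X_{\hat f}$.

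The gap propagates, because the index count $[\Per_f:\Per_{\hat f}]=4$ is what you use in (ii) and in (iii) (``since the kernel has order $4$''). The missing ingredient is that the homomorphism induced by the transposed correspondence is the Rosati adjoint $\hat R=\lambda_{\hat f}^{-1}R^\vee\lambda_f$ of $R$, where $\lambda_f,\lambda_{\hat f}$ are the canonical principal polarizations (Birkenhake--Lange, \S 11.5). Combined with $\hat R\circ R=[2]$, this gives $R^\vee\lambda_f R=2\lambda_{\hat f}$. On $\CC^2$ that is precisely the pairing identity, and $(\deg R)^2=\deg(2\lambda_{\hat f})=16$ then yields the index $4$. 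The paper sidesteps all of this by citing the polarization property (from Chow) together with the other properties of the correspondence.

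\textbf{Secondary points.}
\begin{enumerate}
\item The correspondence is never actually written down; your displayed defining equation is garbled. Both the pullback identity $C^*\omega^f_j=\omega^{\hat f}_j$ and the claim that $C^t\circ C$ is $2\cdot\mathrm{id}$ plus terms acting trivially on the Jacobian are only asserted. Either cite them, as the paper does, or prove them.
\item For (iv), proportionality $\hat{\hat f}=cf$ is not enough, since it only gives $\Per_{\hat{\hat f}}=c^{-1/2}\Per_f$; you need the exact constant. Distribute the factor $1/(4\Delta)$ over $\hat p,\hat q,\hat r$ and use Proposition~\ref{propBasicProperties}~(i),~(ii). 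This gives $\hat{\hat f}=f/4$, hence $\Per_{\hat{\hat f}}=2\Per_f$. Then the kernel $\tfrac12\Per_{\hat{\hat f}}/\Per_{\hat f}$ obtained from (iii) applied to $\hat f$ is exactly $\Per_f/\Per_{\hat f}=\ker R$. Alternatively, redo the divisor check of (iii) in the opposite direction.
\item In (iii), what you need is that $C(P_1)+C(P_2)$ is linearly equivalent to $C(\mathfrak L_f)=2\mathfrak L_{\hat f}$, i.e.\ to \emph{twice} the hyperelliptic class, not once.
\end{enumerate}

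With these repairs, your treatments of (ii) and (iii) are valid and genuinely different from the paper's. For (ii) you use that $\Per_{\hat f}/2\Per_f$ is Lagrangian mod $2$ plus surjectivity of $\mathrm{Sp}_4(\ZZ)\to\mathrm{Sp}_4(\mathbb F_2)$. For (iii) you compute the image divisor directly. The paper instead uses Mumford's explicit homology basis and Smith's description of the kernels.
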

	\begin{proof}
		It is known that there exists a correspondence $C$ between the curves $\mathcal X_f$ and $\mathcal X_{\hat f}$ that satisfies the following properties (for general theory of correspondences and homomorphisms between Jacobian varieties we refer to~\cite[Section~11.5]{LangeBirkenhake} and~\cite[Section~2.5]{Griffiths}).
		
		\begin{enumerate}[label=(\alph*)]
			\item\label{corra} The pullback of holomorphic differential forms from $\mathcal X_f$ to $\mathcal X_{\hat f}$ with respect to $C$ maps $\omega_j^f$ to $\omega_j^{\hat f}$ for $j = 1,2$.
			\item\label{corrb} $C$ induces an isogeny $R:\Jac_{\hat f} \to \Jac_f$ such that the pullback of the principal polarization on $\Jac_f$ with respect to $R$ equals twice the canonical polarization of $\Jac_{\hat f}$.
			\item\label{corrc} The kernel of isogeny $R$ consists of four points described in~\ref{Richiv}.
			\item\label{corrd} The kernel of the dual isogeny $\hat R:\Jac_f \to \Jac_{\hat f}$ consists of four points described in~\ref{Richiii} (we identify Jacobi varieties with their duals via their canonical principal polarization).
		\end{enumerate}
		
		The listed statements were essentially known to Humbert~\cite{Humbert}, but we would like also to give references to modern accounts. The definition of the correspondence $C$ and statement~\ref{corra} can be found in~\cite[Section~3.1]{BostMestre}. In~\cite[Sections~2.6.2-3]{Chow} an accurate verification of~\ref{corra} and~\ref{corrb} is presented. Finally, statements~\ref{corrc} and~\ref{corrd} are contained in~\cite[Corollary~8.4.14]{Smith}.
		
		From~\ref{corra} it is clear that $R$ is induced by the identity mapping of $\CC^2$, i.e. the diagram
		\[
		\begin{tikzcd}
			\CC^2\arrow[d] \arrow[r, "\mathrm{id}"] & \CC^2\arrow[d] \\
			\Jac_{\hat f}\arrow[r, "R"] & \Jac_f
		\end{tikzcd}
		\]
		commutes. The inclusion $\Per_{\hat f} \subset \Per_f$ follows. The identity $\langle w_1, w_2 \rangle_{f} = 2\langle w_1, w_2\rangle_{\hat f}$ for all $w_1,w_2 \in \Per_{\hat f}$ is now an immediate consequence of~\ref{corrb}. Finally, the kernel of isogeny $R$ can contain only the points of order $2$ by~\ref{corrc}, so $2\Per_f \subset \Per_{\hat f}$. Thus,~\ref{Richi} is proved. To prove statement~\ref{Richii} consider a symplectic basis $a_1,a_2,b_1,b_2 \in \Per_f$ such that $a_1,a_2 \in \Per_{\hat f}$. Then using~\ref{Richi} we can simply calculate the intersection pairings $\langle \cdot, \cdot\rangle_{\hat f}$ of vectors $a_1$, $a_2$, $2b_1$, and $2b_2$ (which belong to $\Per_{\hat f}$ in view of the inclusion $2\Per_f \subset \Per_{\hat f}$) and obtain that these vectors constitute a symplectic basis in $\Per_{\hat f}$. In order to see that such bases exist note that there exists a symplectic basis $a_1,a_2,b_1,b_2$ such that the quadruple of points in $\Jac_f$ listed in~\ref{Richiii} is modulo permutation represented by vectors $0, a_1/2, a_2/2,$ and $(a_1 + a_2)/2$ (it suffices to consider a basis in the group $H_1(\mathcal X_f, \ZZ)$ described in~\cite[\S~IIIa.5]{mumfordII}). By~\ref{corra} and~\ref{corrb} it is easy to see that the dual isogeny $\hat R$ is induced by multiplication by $2$ and by~\ref{corrd} points listed in~\ref{Richiii} belong to $\ker \hat R$. Therefore, the vectors $a_1$ and $a_2$ represent the point $0 \in \Jac_{\hat f}$, that is, $a_1,a_2 \in \Per_{\hat f}$.
		
		The statements~\ref{Richiii} and~\ref{Richiv} are given in~\ref{corrd} and~\ref{corrc} respectively.
	\end{proof}

	\begin{definition}
		Let $f = pqr$ be an admissible polynomial, where $p,q,r \in \mathfrak P_2$ and $\Delta(p,q,r) \ne 0$. Let $\hat{f}$ be defined by~\eqref{eqHatFDef}. The isogeny $\mathscr R_{p,q,r}:\Jac_{\hat f} \to \Jac_f$ induced by the identity map of $\CC^2$ is called the Richelot isogeny.
	\end{definition}
	
	\subsection{Kummer surfaces} 
	
	Recall that the {\it{Kummer variety}} of an Abelian variety $A$ is defined to be the quotient of $A$ by the mapping $z \mapsto -z$ (see~\cite[Section~4.8]{LangeBirkenhake}). We denote the Kummer variety of $A$ by $\mathcal K(A)$. If $A = \Jac_f$, where $f$ is an admissible polynomial, then Kleinian functions of weight $2$ can be used for the embedding of $\mathcal K(\Jac_f)$ into $\CC\PP(3)$.
	
	Let $f$ be an admissible polynomial. We denote by $\mathscr S_f$ the holomorphic mapping $\mathscr S_f:\CC^2 \to \CC^4$ defined by the formula
	\begin{equation*}
		\mathscr S_f(z) = \begin{pmatrix}
			S^f(z) & S_{22}^f(z) & S_{12}^f(z) &  S_{11}^f(z)
		\end{pmatrix}^T
	\end{equation*}
	Note that $\mathscr S_f(z) \ne 0$ for all $z \in \CC^2$ due to \cite[Proposition~3.1~(iii)]{KleinianWeight2}. If $z - z' \in \Per_f$, then there exists a non-zero scalar $c \in \CC$ such that $\mathscr S_f(z') = c \mathscr S_f(z)$. Therefore, there exists a unique holomorphic map $\mathscr K_f:\Jac_f \to \CC \PP(3)$ such that the diagram
	\[
	\begin{tikzcd}
		\Jac_f\arrow[r, "\mathscr K_f"] & \CC \PP(3) \\
		\CC^2\arrow[u]\arrow[r, "\mathscr S_f"] & \CC^4\arrow[u]
	\end{tikzcd}
	\]
	commutes (the vertical arrows are canonical quotient mappings).  It is easy to see that the map $\mathscr K_f$ induces an embedding of the Kummer variety $\mathcal K(\Jac_f)$ in $\CC \PP(3)$. More precisely, from~\cite[Proposition~3.1~(i) and Corollary~3.5]{KleinianWeight2} it follows that $\mathscr K_f$ coincides with the classical embedding of the Kummer variety via theta functions (see~\cite[Section~3.2]{mumfordII}) modulo a linear variable change in $\CC \PP(3)$. The map $\mathscr K_f$ has an advantage over the map defined by theta functions as its composition with $\mathscr A_f$ can be explicitly expressed via rational functions on $\mathcal X_f^{(2)}$ (namely, by formulas~\eqref{eqXiDef} and~\eqref{eqFdef} that define functions $\xi_{jk}^{f}$). In particular, the study of the map $\mathscr K_f \circ \mathscr A_f:\mathcal X_f^{(2)} \to \CC\PP(3)$ can be carried out for curves defined over general fields. The thorough exposition of this theory in full generality can be found in~\cite{CasselsFlynn}.
	
	Now assume that $f = pqr$, where $p,q,r \in \mathfrak P_2$ and $\Delta(p,q,r) \ne 0$. Let $\hat f$ be defined by~\eqref{eqHatFDef}. The Richelot isogeny $\Jac_{\hat f} \to \Jac_f$ induces the holomorphic map between Kummer varieties $\mathcal K(\Jac_{\hat f}) \to \mathcal K(\Jac_f)$. We aim to find an explicit expression for this map using embeddings into $\CC\PP(3)$ defined above. That is, we derive a formula for a map $\mathfrak R_{p,q,r}$ from $\CC\PP(3)$ to itself that makes the diagram
	\begin{equation}\label{eqMainPropertyR}
		\begin{tikzcd}
			\CC\PP(3) \arrow[r, "\mathfrak R_{p,q,r}"] & \CC\PP(3) \\
			\Jac_{\hat f} \arrow[r, "\mathscr R_{p,q,r}"]\arrow[u, "\mathscr K_{\hat f}"] & \Jac_f \arrow[u, "\mathscr K_f"]
		\end{tikzcd}
	\end{equation}
	commutative.
	
	In order to find the expression for $\mathfrak R$ we need to recall several properties of the Kummer varieties $\mathcal K(\Jac_f)$. For convenience we shall call the image of the map $\mathscr K_f:\Jac_f \to \CC\PP(3)$ the {\it{Kummer surface associated with an admissible polynomial}} $f$. To denote it we use the symbol $\mathcal K_f$.
	
	The Kummer surface $\mathcal K_f$ has $16$ {\it{nodes}} -- the images of the points of order $2$ in $\Jac_f$. All non-zero points of order $2$ in $\Jac_f$ are in one-to-one correspondence with unordered pairs of Weierstrass points in $\mathcal X_f$ via the map $\mathscr A_f$. If $P,Q \in \mathcal X_f$ are distinct Weierstrass points, then the corresponding node can be explicitly calculated. Consider a decomposition $f = pg$, such that $x$-coordinates of $P$ and $Q$ are the roots of $p \in \mathfrak P_2$, and $\deg g \le 4$. Then the corresponding node $N_{P,Q} \in \mathcal K_f$ in homogeneous coordinates can be calculated (see~\cite[Section~3.1]{CasselsFlynn}) by the formula
	\begin{equation}\label{eqNodeCoordinates}
		N_{P,Q} = \left(p_2: -p_1:-p_0:-\frac{1}{4}(p_2^2g_0 + p_0p_2g_2 + p_0^2g_4)\right).
	\end{equation}
	This formula is derived by a direct calculation of the functions $\xi^f_{jk}$ at $D = (P) + (Q)$ (if one of the Weierstrass points $P, Q$ is at infinity, then the calculation also involves passing to the limit). The remaining node $N_0$ (that corresponds to $0 \in \Jac_f$) has homogeneous coordinates $(0:0:0:1)$. Moreover, for each node $N = \mathscr K_f(w)$ there is a corresponding symmetry of the Kummer surface $\mathcal K_f$ induced by the map $z \mapsto z + w$ on $\Jac_f$. More precisely, there exists a unique (modulo multiplication by a scalar) matrix $\mathfrak X_N \in \CC^{4 \times 4}$ such that the $\mathscr K_f(z + w) = \mathfrak X_N\mathscr K_f(z)$ for all $z \in \Jac_f$. Clearly, $\mathfrak X_{N_0} = I$. Later we calculate the matrix $\mathfrak X_{N}$ in a specific basis, but it is also possible to give a direct formula (see~\cite[Section~3.2]{CasselsFlynn}).
	
	We also shall use the {\it{tropes}} of Kummer surfaces. A trope is a hyperplane in $\CC\PP(3)$ that contains $6$ distinct nodes of a Kummer surface. Indeed, for each Weierstrass point $P \in \mathcal X_f$ there is a trope $\mathfrak T_{P}$ defined by the equation
	\begin{equation}\label{eqTrope1}
		\mathfrak T_P = \{a = (a_1:a_2:a_3:a_4) \in \CC\PP(3): a_1\beta^2 + a_2 \alpha\beta - a_3\alpha^2 = 0\},
	\end{equation}
	where the $x$-coordinate of $P$ is the root of the polynomial $\alpha x + \beta$. It is straightforward to verify that $\mathfrak T_P$ contains $N_0$ and all the nodes of the form $N_{P,Q}$, where $Q \ne P$ is a Weierstrass point. There are also tropes associated with unordered triples of Weierstrass points of $\mathcal X_f$. That is, if $P,Q,R \in \mathcal X_f$ are distinct Weierstrass points, then there is a decomposition $f = gh$, where $\deg g, \deg h \le 3$ and $x$-coordinates of $P,Q,R$ are exactly the roots of $g$. In this case the trope $\mathfrak T_{P,Q,R}$ is defined (see~\cite[Section~3.7]{CasselsFlynn}) as
	\begin{equation}\label{eqTrope3}
		\mathfrak T_{P,Q,R} = \{a \in \CC\PP(3): (g_2h_0 + g_0h_2)a_1 + (g_3 h_0 + g_0 h_3)a_2 - (g_3 h_1 + g_1 h_3)a_3 + 4a_4 = 0\}.
	\end{equation}
	It can be verified that $\mathfrak T_{P,Q,R}$ contains $N_{U,V}$ if and only if either $U,V \in \{P,Q,R\}$, or $U,V \notin \{P,Q,R\}$. That is, $\mathfrak T_{P,Q,R}$ also contains exactly six nodes of the Kummer surface. Note that a triple $(P,Q,R)$ of distinct Weierstrass points of $\mathcal X_f$ induces the same trope $\mathfrak T_{P,Q,R}$ as the triple of remaining Weierstrass points. Therefore, there are $6$ tropes of the form $\mathfrak T_{P}$ and $10$ tropes of the form $\mathfrak T_{P,Q,R}$.

	\section{Richelot isogeny of Kummer surfaces}\label{secRichelotIsogeny}
	In this section we prove certain properties of Richelot isogenies and Kleinian functions of weight 2 associated with Richelot isogenous curves, that will allow us to find the explicit expression of the Richelot isogeny in Section~\ref{secCalculations}.
	
	\subsection{General form of the Richelot isogeny of Kummer surfaces}\label{subsecRichelotIsogeny}
	
	For brevity we shall fix some notation. Namely, we fix an admissible polynomial $f = pqr$, where $p,q,r \in \mathfrak P_2$ and $\Delta(p,q,r) \ne 0$. Also we let $\hat p = [q,r]$, $\hat q = [r,p]$, $\hat r = [p,q]$, and \[
	\hat f = \frac{1}{4\Delta(p,q,r)}\hat p \hat q \hat r.
	\]
	Also we introduce the nodes $N_p,N_q,N_r,N_0$ of the Kummer surface associated with $f$. Namely, $N_p$ is the node $N_{P,Q}$, where $x$-coordinates of $P$ and $Q$ are the roots of $p$. The nodes $N_q$ and $N_r$ are defined similarly to $N_p$, while $N_0$ is the node that corresponds to $0 \in \Jac_f$. Finally, we introduce similar nodes for the polynomial $\hat f$ -- the nodes $\hat N_p$, $\hat N_q$, $\hat N_r$, and $\hat N_0$. For our considerations it will be important that there are exactly four tropes of the Kummer surface $\mathcal K_f$ that do not contain any of the nodes $N_p,N_q,N_r,N_0$. Indeed, these are the tropes $\mathfrak T_{P,Q,R}$, where $x$-coordinates of the points $P$, $Q$, and $R$ are the roots of $p$, $q$, and $r$ respectively. There are exactly $8$ such triples, but since the complement of such triple induces the same trope, there are exactly $4$ tropes of this kind.
	
	\begin{lemma}\label{lemMainKummerLemma}
		Let $\mathrm{Sq}:\CC\PP(3) \to \CC\PP(3)$ denote the mapping induced by squaring all the homogeneous coordinates, i.e.
		\[
		\mathrm{Sq}(a) = (a_1^2:a_2^2:a_3^2:a_4^2).
		\]
		Then there exist matrices $\mathfrak B, \hat{\mathfrak  B} \in \CC^{4 \times 4}$ satisfying the following properties.
		\begin{enumerate}[label=(\roman*)]
			\item\label{mainKummeri} For all $z \in \Jac_{\hat f}$ the equality
			\[
			\mathfrak B\mathscr K_f(\mathscr R_{p,q,r}(z)) = \mathrm{Sq}\left(\hat {\mathfrak B}\mathscr K_{\hat f}(z)\right).
			\]
			\item\label{mainKummerii} Matrix $\hat {\mathfrak B}$ is invertible and the hyperplanes in $\CC\PP(3)$ induced by its rows are invariant with respect to the symmetries $\mathfrak X_{\hat N_p}$, $\mathfrak X_{\hat N_q}$, $\mathfrak X_{\hat N_r}$.
			\item\label{mainKummeriii} Matrix $\mathfrak B$ is invertible and the hyperplanes in $\CC\PP(3)$ induced by its rows are the tropes of the Kummer surface $\mathcal K_f$ that do not contain any of the nodes $N_0, N_p, N_q, N_r$.
		\end{enumerate}
	\end{lemma}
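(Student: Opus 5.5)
We plan to work on the level of the spaces $\mathfrak S_f$ and $\mathfrak S_{\hat f}$ of Kleinian functions of weight $2$. By Proposition~\ref{propRichProperties}~\ref{Richi} we have $\Per_{\hat f}\subset\Per_f$ and $\langle\cdot,\cdot\rangle_f = 2\langle\cdot,\cdot\rangle_{\hat f}$ on $\Per_{\hat f}$. The first step is to compare the quasi-periods. Since the correspondence pulls $\omega_j^f$ back to $\omega_j^{\hat f}$, the Legendre identities~\eqref{eqLegendre} for both curves, applied in the adapted symplectic basis of Proposition~\ref{propRichProperties}~\ref{Richii}, should show the following. For $w \in \Per_{\hat f}$ the difference $2\eta^{\hat f}(w)-\eta^f(w)$ is given by a symmetric matrix $M$ applied to $w$. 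Absorbing the factor $\exp(z^TMz)$, we will then show that for every $\phi\in\mathfrak S_f$ the function $\psi(z)=e^{z^TMz}\phi(z)$ transforms under $\Per_{\hat f}$ like the square of an element of $\mathfrak S_{\hat f}$. In other words, $\psi$ lies in the space of ``weight $4$'' functions for $\hat f$. This is more convenient in the theta picture: via the isomorphism $T$ of~\eqref{eqTisomorphism}, we must show that $\theta$-functions of weight $2$ for $\Omega$ become, after the variable change, functions of level $4$ for $2\Omega$ (or $\Omega/2$, depending on the basis).

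Next we diagonalise the Heisenberg action. The translations by the kernel points $\hat N_p,\hat N_q,\hat N_r$ from Proposition~\ref{propRichProperties}~\ref{Richiv} act on $\mathfrak S_{\hat f}$ through the matrices $\mathfrak X_{\hat N_\ast}$. These form a commuting family modulo scalars, since the kernel is isotropic for the Weil pairing. Hence $\mathfrak S_{\hat f}$ splits into four one-dimensional common eigenspaces. In the theta picture these are spanned by the classical $\theta\!\left[\begin{smallmatrix}\epsilon\\0\end{smallmatrix}\right](2z,2\Omega)$-type functions. Choosing eigenvectors $\psi_1,\dots,\psi_4$ defines the rows of $\hat{\mathfrak B}$ through $\psi_i = (\hat{\mathfrak B}\mathscr S_{\hat f})_i$. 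The matrix is invertible because the $\psi_i$ form a basis, and the row hyperplanes are invariant under the symmetries because each row is a common eigenvector. This gives~\ref{mainKummerii}.

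For~\ref{mainKummeri} and~\ref{mainKummeriii} we use the classical duplication (addition) formula for theta functions, $\theta[\epsilon](z,\Omega)\theta[\epsilon](0,\Omega)$ being expressed via sums of $\theta[\cdot](z,2\Omega)^2$, or equivalently the Riemann relation $\theta_{\mathrm{even}}(2z,2\Omega)^2$ expressed linearly in weight-$2$ thetas for $\Omega$. These formulas show that each square $\psi_i^2$ is a linear combination of the components of $\mathscr S_f$, up to the common factor $e^{z^TMz}$. Hence there is a matrix $\mathfrak B$ with $\mathfrak B\mathscr S_f = c(z)\,\mathrm{Sq}(\hat{\mathfrak B}\mathscr S_{\hat f})$, which projectivises to~\ref{mainKummeri}. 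Invertibility of $\mathfrak B$ follows from the linear independence of the four squares $\psi_i^2$. This independence holds because they are distinct characters' squares, or because their zero sets, the images of the row hyperplanes, differ.

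To identify the rows of $\mathfrak B$ as the four tropes, note that the zero set of $\psi_i^2$ on $\Jac_f$ is the image of the divisor $\{\psi_i=0\}$ counted twice. Each row therefore cuts $\mathcal K_f$ in a double conic, so it is a trope. The trope avoids $N_0,N_p,N_q,N_r$ because these nodes are the images of $\ker\mathscr R_{p,q,r}$-translates of $0$. Indeed, the $\psi_i$ do not vanish at the images of $0$ and of the kernel points, because these are even theta-constants with nonzero values. Since exactly four tropes avoid those nodes, the rows are those tropes. We expect the main obstacle to be the first step: pinning down the precise relation between $\eta^f$ and $\eta^{\hat f}$, and hence the exact theta-characteristics involved, so that the duplication formula lands on weight-$2$ functions for $f$ rather than for a translate.
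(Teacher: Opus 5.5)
There is a genuine gap in the invertibility of $\mathfrak B$, and part~(i) is not actually proved either. Neither of your reasons for the linear independence of the four squares works. Normalise the lift of translation by $x\in\{b_1,b_2,b_1+b_2\}$ (representatives of $\ker\mathscr R_{p,q,r}$) as $U_x\psi(z)=\exp[-\eta^{\hat f}(2x)^T(z+x/2)]\psi(z+x)$. Legendre's identities show that $U_x$ preserves $\mathfrak S_{\hat f}$, and a one-line computation gives $U_x^2=1$. So the eigenvalues are $\pm1$, and after squaring all four $\psi_i^2$ transform in exactly the same way. The characters are lost on squaring; this is precisely why the squares can all lie in the single space $\mathfrak S_f$, and it says nothing about independence. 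Distinct zero sets only show that the $\psi_i^2$ are pairwise non-proportional, and four pairwise non-proportional vectors in a four-dimensional space may well be dependent.

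A genuine non-vanishing input is needed here, and this is where the paper actually uses the addition formula. Take a symplectic basis $a_1,a_2,b_1,b_2$ of $\Per_f$ such that $a_1,a_2,2b_1,2b_2$ is symplectic in $\Per_{\hat f}$. Then the Riemann matrix of $\hat f$ is $2\Omega$, and the common eigenvectors are the $\hat T$-images of $\theta\begin{bmatrix}0\\ \alpha_j\end{bmatrix}(z,\Omega)$. The addition formula writes their squares in the basis $\theta\begin{bmatrix}\alpha_k\\ 0\end{bmatrix}(2z,2\Omega)$ of $R_2^\Omega$, with coefficient matrix equal to a $\pm1$ Hadamard matrix times $\mathrm{diag}\bigl(\theta\begin{bmatrix}\alpha_k\\ 0\end{bmatrix}(0,2\Omega)\bigr)$. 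This matrix is invertible because even theta constants of the genus-$2$ Riemann matrix $2\Omega$ do not vanish (Thomae).

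For~(i), your Step~1 only shows that $e^{z^TMz}\mathfrak S_f$ lies in the sixteen-dimensional weight-$4$ space of $\hat f$, which is the wrong direction. You need $e^{-z^TMz}\psi_i^2$ to have the $\mathfrak S_f$-multiplier also for the periods $b_1,b_2\in\Per_f\setminus\Per_{\hat f}$. The formulas you quote do not give this: the first expresses a level-one theta function through squares, and the square of a weight-$2$ function for $\Omega$ has weight $4$, not $2$. Within your framework the step closes quickly. For an eigenvector, $U_x^2=1$ yields $\psi(z+x)^2=\exp[2\eta^{\hat f}(2x)^T(z+x/2)]\psi(z)^2$. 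Your Step~1 relation at $w=2x$, together with additivity of $\eta^f$, gives $\eta^f(x)=\eta^{\hat f}(2x)-Mx$, which is exactly the required multiplier. In the paper (i) is immediate, since $\theta\begin{bmatrix}0\\ \alpha_j\end{bmatrix}(z,\Omega)^2\in R_2^\Omega$ and $T(\hat\phi_j^2)=c(z)(\hat T\hat\phi_j)^2$.

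Finally, in~(iii) the nodes $N_p,N_q,N_r$ are the images of $a_1/2,a_2/2,(a_1+a_2)/2$, i.e.\ of the kernel of the dual isogeny. They are not images of $\ker\mathscr R_{p,q,r}$, which maps to $N_0$. The non-vanishing you need is $\theta\begin{bmatrix}0\\ \alpha_j+\alpha_k\end{bmatrix}(0,\Omega)\neq0$, which again presupposes the explicit identification of the $\psi_i$.
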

	\begin{proof}
		Let $a_1,a_2,b_1,b_2 \in \Per_f$ be a symplectic basis such that $a_1,a_2,2b_1,2b_2$ is a symplectic basis in $\Per_{\hat f}$. Let $A$ and $B$ be the $2\times2$ matrices with columns $a_1,a_2$ and  $b_1,b_2$ respectively. From Proposition~\ref{propRichProperties}~\ref{Richiii} it follows that the nodes $N_p,N_q,N_r$ correspond modulo permutation to the points $a_1/2, a_2/2$, and $(a_1 + a_2)/2$ of order $2$ in $\Jac_f$. Finally, let $\Omega = A^{-1}B$. Similarly, from Proposition~\ref{propRichProperties}~\ref{Richiv} we conclude that the nodes $\hat N_p,\hat N_q,\hat N_r$ correspond modulo permutation to the points $b_1, b_2,$ and $b_1 + b_2$ of order $2$ in $\Jac_{\hat f}$. Finally, let $T:R_2^{\Omega} \to \mathfrak S_f$ and $\hat T:R_2^{2\Omega} \to \mathfrak S_{\hat f}$ be the isomorphisms defined by~\eqref{eqTisomorphism} for the polynomials $f$ and $\hat f$ respectively and the foregoing choice of symplectic bases.
		
		Now we choose convenient bases in spaces $R_2^\Omega$ and $R_2^{2\Omega}$. Namely, let
		\[
		2\alpha_1 = \begin{pmatrix}
			0 \\ 0
		\end{pmatrix},\;\;
		2\alpha_2 = \begin{pmatrix}
			1 \\ 0
		\end{pmatrix},\;\;
		2\alpha_3 = \begin{pmatrix}
			0 \\ 1
		\end{pmatrix},\;\;
		2\alpha_4 = \begin{pmatrix}
			1 \\ 1
		\end{pmatrix}
		\]
		and consider functions
		\[
		\hat \phi_j(z) = \theta\begin{bmatrix}
			0 \\ \alpha_j
		\end{bmatrix}(z, \Omega),\;\;\phi_j = \left(\theta\begin{bmatrix}
			0 \\ \alpha_j
		\end{bmatrix}(z, \Omega)\right)^2.
		\]
		for $j = 1,2,3,4$. It is clear that $\phi_j \in R_2^{\Omega}$ and $\hat \phi_j \in R_2^{2\Omega}$ for all $j = 1,2,3,4$. Moreover, functions $\hat \phi_j$ constitute a basis in $R_2^{2\Omega}$ by~\cite[Proposition~II.1.3]{mumfordI}. To see that the functions $\phi_j$ form a basis in $R_2^\Omega$ we apply a version of the addition formula (see, e.g.~\cite[\S~IV.1]{Igusa}) to obtain
		\begin{multline}\label{eqTheta1Basis}
			\phi_j(z) = \theta(z - \alpha_j, \Omega)\theta(z + \alpha_j, \Omega) = \sum_{k = 1}^4 \theta\begin{bmatrix}
				\alpha_k \\ 0
			\end{bmatrix}(2z, 2\Omega)\theta\begin{bmatrix}
				\alpha_k \\ 0 
			\end{bmatrix}(2\alpha_j, \Omega) = \\ \sum_{k = 1}^4 \exp(2\pi i \alpha_k^T\alpha_j)\theta\begin{bmatrix}
				\alpha_k \\ 0 
			\end{bmatrix}(0, \Omega)\theta\begin{bmatrix}
				\alpha_k \\ 0
			\end{bmatrix}(2z, 2\Omega)
		\end{multline}
		From~\cite[Proposition~II.1.3]{mumfordI} we know that the functions
		\[
		\psi_j(z) = \theta\begin{bmatrix}
			\alpha_j \\ 0
		\end{bmatrix}(2z, 2\Omega)
		\]
		for $j = 1,2,3,4$ form a basis in $R_2^\Omega$. The equality~\eqref{eqTheta1Basis} can be rewritten in the form
		\[
		\begin{pmatrix}
			\phi_1(z)\\ \phi_2(z)\\ \phi_3(z)\\ \phi_4(z)
		\end{pmatrix} = \begin{pmatrix}
			1 & 1 & 1 & 1\\
			1 & -1 & 1 & -1 \\
			1 & 1 & -1 & -1 \\
			1 & -1 & -1 & 1
		\end{pmatrix}\begin{pmatrix}
			\theta_1 & 0 & 0 & 0\\
			0 & \theta_2 & 0 & 0\\
			0 & 0 & \theta_3 & 0 \\
			0 & 0 & 0 & \theta_4
		\end{pmatrix}\begin{pmatrix}
			\psi_1(z) \\ \psi_2(z) \\ \psi_3(z) \\ \psi_4(z)
		\end{pmatrix}.
		\]
		where we let $\theta_j$ for $j = 1,2,3,4$ denote
		\[
		\theta_j = \theta\begin{bmatrix}
			\alpha_j \\ 0 
		\end{bmatrix}(0, 2\Omega).
		\]
		Clearly, for $\phi_1,\phi_2,\phi_3,\phi_4$ to be a basis in $R_2^\Omega$ it suffices to prove that $\theta_j \ne 0$ for all $j$. But this is clear from the Thomae formula~\cite[Theorem~III.8.1]{mumfordII}, which implies that for a period matrix of a genus $2$ Riemann surface all even theta constants do not vanish.
		
		With this preparation we finally let $\mathfrak B$ and $\hat {\mathfrak B}$ denote the $4 \times 4$ matrices such that the equalities
		\[
		\begin{pmatrix}
			T\phi_1(z) \\ T\phi_2(z) \\T\phi_3(z) \\T\phi_4(z)
		\end{pmatrix} = \mathfrak B\mathscr S_f(z),\;\;\begin{pmatrix}
			\hat T\hat \phi_1(z) \\ \hat T\hat \phi_2(z) \\ \hat T\hat \phi_3(z) \\ \hat T\hat \phi_4(z)
		\end{pmatrix} = \hat {\mathfrak B}\mathscr S_{\hat f}(z)
		\]
		hold for all $z \in \CC^2$. Since $\phi_j(z) = \hat \phi_j(z)^2$ it is easy to see that $Tf_j(z) = c(z)T\hat \phi_j(z)^2$ for $j =1,2,3,4$ with a suitable scalar $c(z)$. Thus, the statement~\ref{mainKummeri} holds. From the definition of the functions $\hat \phi_j$ it follows that the shift by one of the vectors $b_1, b_2, b_1 + b_2$ multiplies the vector $\begin{pmatrix} \hat T\hat f_1(z) & \hat T\hat f_2(z) & \hat T\hat f_3(z) &\hat T\hat f_4(z) \end{pmatrix}^T$ by a common scalar factor and an independent of $z$ diagonal matrix comprised of values $\pm 1$ on the diagonal. Thus, in this basis the operators $\mathfrak X_{\hat N_p}$, $\mathfrak X_{\hat N_q}$, and $\mathfrak X_{\hat N_r}$ are diagonal. Thus,~\ref{mainKummerii} is proved. It remains to prove~\ref{mainKummeriii}. Note that the intersection of the hyperplane defined by $j$-th row of $B$ with the Kummer surface of $f$ coincides with the set $\{\mathscr K_f(z): z \in \CC^2, T\phi_j(z) = 0\}$. It is known that a theta function associated with a Riemann surface of genus $2$ with half-integer characteristic $\begin{bmatrix} \alpha & \beta \end{bmatrix}^T$ vanishes exactly on six points of order $2$ (namely, on the set of odd theta characteristics shifted by the vector $\Omega \alpha + \beta$). Thus, the hyperplane defined by $j$-th row of $B$ is a trope. Moreover, $\phi_j$ does not vanish at the points $\alpha_1, \alpha_2, \alpha_3, \alpha_4$, and by definition of $T$ it is clear that $T\phi_j$ does not vanish at $0, a_1/2, a_2/2,$ and $(a_1 + a_2)/2$. Thus, the trope defined by any row of $B$ does not contain any of the nodes $N_p, N_q, N_r,$ and $N_0$. Thus, we have proved~\ref{mainKummeriii}.
	\end{proof}
	\begin{remark}
		It is easy to see that the conditions~\ref{mainKummerii} and~\ref{mainKummeriii} of Lemma~\ref{lemMainKummerLemma} determine the matrices $\mathfrak B$ and $\hat {\mathfrak B}$ up to a permutation of rows and multiplication of each row by a non-zero scalar. Moreover,
		the property~\ref{mainKummeri} of Lemma~\ref{lemMainKummerLemma} stays true if we permute the rows of $\hat {\mathfrak B}$ and multiply them by scalars and perform the corresponding transformation of the matrix $\mathfrak B$ (with the squared scalars). Thus, we can formulate a more precise statement than Lemma~\ref{lemMainKummerLemma}: for any matrix $\hat {\mathfrak B}$ that satisfies~\ref{mainKummerii} there exists a matrix $\mathfrak B$ such that statements~\ref{mainKummeri} and~\ref{mainKummeriii} hold.
	\end{remark}

	\subsection{Special properties of the components of \texorpdfstring{$\mathfrak R_{p,q,r}$}{mathfrak R (p,q,r)}}
	
	From Lemma~\ref{lemMainKummerLemma} it follows that $\mathfrak R_{p,q,r}$ from diagram~\eqref{eqMainPropertyR} is indeed quadratic. To derive further information it will be useful to introduce the components of $\mathfrak R$.
	
	\begin{definition}\label{defRichelotRepresentation}
		Let $f = pqr$ be an admissible polynomial, where $p,q,r \in \mathfrak P_2$ and $\Delta(p,q,r) \ne 0$. Let $\hat f$ be the result of the Richelot construction associated with the decomposition $f = pqr$. We say that a quadruple of symmetric $4 \times 4$ matrices $(\mathfrak A_1, \mathfrak A_2, \mathfrak A_3,\mathfrak A_4)$ represents the Richelot isogeny $\mathscr R_{p,q,r}$ if the homogeneous map of order $2$ from $\CC^4$ to itself given by the formula
		\[
		a \mapsto \begin{pmatrix}
			a^T\mathfrak A_1a \\ a^T\mathfrak  A_2a \\ a^T\mathfrak A_3a \\ a^T \mathfrak A_4a
		\end{pmatrix}
		\]
		maps non-zero vectors to non-zero vectors and if the induced by it mapping $\mathfrak A:\CC\PP(3) \to \CC\PP(3)$ makes the
		diagram
		\[
		\begin{tikzcd}
			\CC\PP(3) \arrow[r, "\mathfrak A"] & \CC\PP(3) \\
			\Jac_{\hat f} \arrow[r, "\mathscr R_{p,q,r}"]\arrow[u, "\mathscr K_{\hat f}"] & \Jac_f \arrow[u, "\mathscr K_f"]
		\end{tikzcd}
		\]
		commutative.
	\end{definition}
	
	Now we derive some properties satisfied by the matrices $\mathfrak A_j$, $j = 1,2,3,4$ from Definition~\ref{defRichelotRepresentation}. To derive them we find a relation between vector-valued functions $\mathscr S_f$ and $\mathscr S_{\hat f}$ using these matrices and apply Taylor expansions~\eqref{eqTaylorExpansions}. 
	
	\begin{lemma}\label{lemEtaTransformation}
		Let $f = pqr$ be an admissible polynomial and assume that $p,q,r \in \mathfrak P_2$ and $\Delta(p,q,r) \ne 0$. Then there exists a uniquely defined matrix $\mathfrak H^{p,q,r} \in \CC^{2 \times 2}$ such that 
		\begin{equation}\label{eqEtaTransformation}
			\eta^f(w) = 2\eta^{\hat f}(w) + \mathfrak H^{p,q,r}w
		\end{equation}
		for all $w \in \Per_{\hat f}$. Moreover, $\mathfrak H^{p,q,r}$ is symmetric.
	\end{lemma}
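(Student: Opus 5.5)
The plan is to define the discrepancy $D(w) = \eta^f(w) - 2\eta^{\hat f}(w)$ on $\Per_{\hat f}$ and show that it is the restriction of a complex-linear map whose matrix is symmetric. By Proposition~\ref{propRichProperties}~\ref{Richi} we have $\Per_{\hat f}\subset \Per_f$, so $D$ is defined on $\Per_{\hat f}$. Both $\eta^f$ and $\eta^{\hat f}$ are additive on their lattices, since they are integrals of fixed forms over homology classes. Hence $D$ is $\ZZ$-linear on $\Per_{\hat f}$. It therefore suffices to find a complex matrix $\mathfrak H$ with $D(w) = \mathfrak H w$ on a $\ZZ$-basis of $\Per_{\hat f}$.

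\emph{Uniqueness.} $\Per_{\hat f}$ is a lattice of rank $4$ in $\CC^2$, so it spans $\CC^2$ over $\mathbb R$, and in particular over $\CC$. A matrix $\mathfrak H$ is therefore determined by its values on $\Per_{\hat f}$.

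\emph{Choice of bases and notation.} By Proposition~\ref{propRichProperties}~\ref{Richii}, choose a symplectic basis $a_1,a_2,b_1,b_2$ of $\Per_f$ such that $a_1,a_2,2b_1,2b_2$ is a symplectic basis of $\Per_{\hat f}$. Set $A=(a_1\ a_2)$ and $B=(b_1\ b_2)$. Let $\eta_A,\eta_B$ be the $\eta^f$-period matrices of $A$ and $B$. Let $\hat\eta_A,\hat\eta_{\hat B}$ be the $\eta^{\hat f}$-period matrices of the columns of $A$ and $\hat B=2B$. Put
\[
X = \eta_A - 2\hat\eta_A,\qquad Y = 2\eta_B - 2\hat\eta_{\hat B},
\]
so that $X$ and $Y$ are the values of $D$ on $a_1,a_2$ and on $2b_1,2b_2$ respectively. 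The candidate is $\mathfrak H = XA^{-1}$. It reproduces $D$ on $a_1,a_2$ by construction, so it remains to check two things: $Y = \mathfrak H\cdot 2B$, and $\mathfrak H$ is symmetric.

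\emph{Key step: Legendre's relations.} This is the only nontrivial point, and I expect it to be the main obstacle mainly as careful bookkeeping of factors of $2$. Apply \eqref{eqLegendre} to $f$ with the basis $(A,B)$, and to $\hat f$ with the basis $(A,2B)$:
\[
\eta_A^TB - A^T\eta_B = 2i\pi I,\qquad 2\hat\eta_A^TB - A^T\hat\eta_{\hat B} = 2i\pi I.
\]
Then
\[
X^T(2B) - A^TY = 2\left(\eta_A^TB - A^T\eta_B\right) - 2\left(2\hat\eta_A^TB - A^T\hat\eta_{\hat B}\right) = 4i\pi I - 4i\pi I = 0,
\]
hence $Y = A^{-T}X^T(2B)$.

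Next, the symmetry statements in \eqref{eqLegendre} give that $\eta_A^TA$ and $\hat\eta_A^TA$ are symmetric. Therefore $X^TA$ is symmetric, that is, $A^TX = X^TA$. Multiplying on the left by $A^{-T}$ and on the right by $A^{-1}$ gives $XA^{-1} = A^{-T}X^T$. So $\mathfrak H = XA^{-1}$ is symmetric, and
\[
Y = A^{-T}X^T(2B) = \mathfrak H\cdot 2B.
\]

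\emph{Conclusion.} $D$ agrees with $w\mapsto \mathfrak H w$ on the basis $a_1,a_2,2b_1,2b_2$ of $\Per_{\hat f}$. By $\ZZ$-linearity it agrees on all of $\Per_{\hat f}$, which gives \eqref{eqEtaTransformation}. Uniqueness was shown above, and symmetry of $\mathfrak H$ was established in the key step.
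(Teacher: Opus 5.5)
Your proof is correct and is essentially the paper's argument. It uses the same candidate $\mathfrak H^{p,q,r} = (\eta_A - 2\hat\eta_A)A^{-1}$ and checks it on the basis $a_1,a_2,2b_1,2b_2$ of $\Per_{\hat f}$ via Legendre's relations for $f$ with $(A,B)$ and for $\hat f$ with $(A,2B)$. The only difference is cosmetic: you establish symmetry first, from the symmetry of $\eta_A^TA$ and $\hat\eta_A^TA$, and combine it with $\eta_A^TB - A^T\eta_B = 2i\pi I$, whereas the paper uses the symmetry of $A^{-1}B$ and the transposed relation $B\eta_A^T - A\eta_B^T = 2i\pi I$.
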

	\begin{proof}
		Let $a_1,a_2,b_1,b_2 \in \Per_f$ be a symplectic basis such that $a_1,a_2,2b_1,2b_2$ is a symplectic basis in $\Per_{\hat f}$ (see Proposition~\ref{propRichProperties}~\ref{Richii}). Let $A$ and $B$ be the matrices with columns $a_1,a_2$ and $b_1,b_2$ respectively. Further let $\eta_A$ and $\eta_B$ have the columns $\eta^f(a_1), \eta^f(a_2)$ and $\eta^f(b_1), \eta^f(b_2)$ respectively. Finally, let $\hat\eta_A$ and $\hat\eta_B$ have the columns $\eta^{\hat f}(a_1), \eta^{\hat f}(a_2)$ and $\eta^{\hat f}(2b_1), \eta^{\hat f}(2b_2)$ respectively. With these definitions it is clear that if $\mathfrak H^{p,q,r}$ exists, then it satisfies
		\[
		\eta_A = 2\hat \eta_A + \mathfrak H^{p,q,r}A,
		\]
		implying that $\mathfrak H^{p,q,r} = (\eta_A - 2\hat\eta_A)A^{-1}$. This verifies uniqueness of $\mathfrak H^{p,q,r}$. Now we prove that the matrix $(\eta_A - 2\hat\eta_A)A^{-1}$ satisfies~\eqref{eqEtaTransformation} for all $w \in \Per_{\hat f}$. Clearly it suffices to verify this for $w \in \{a_1,a_2,b_1,b_2\}$ and by definition it is clear for $w \in \{a_1,a_2\}$. It remains to verify the equality
		\[
		2\eta_B = 2\hat \eta_B + 2(\eta_A - 2\hat\eta_A)A^{-1}B.
		\]
		Using the fact that $A^{-1}B$ is symmetric and applying Legendre's identities~\eqref{eqLegendre} we get
		\begin{multline*}
			(\eta_B - \hat \eta_B) - (\eta_A - 2\hat \eta_A)A^{-1}B = (\eta_B - \hat \eta_B) - (\eta_A - 2\hat \eta_A)B^T\left(A^T\right)^{-1} = \\ (\eta_B - \hat \eta_B) - (2i\pi I + \eta_BA^T - 2i\pi I - \hat\eta_B A^T)\left(A^T\right)^{-1} = (\eta_B - \hat \eta_B) - (\eta_B - \hat \eta_B) = 0.
		\end{multline*}
		Thus,~\eqref{eqEtaTransformation} holds for $w \in \{b_1,b_2\}$ and, consequently, for all $w \in \Per_{\hat f}$. It remains to show that $(\eta_A - 2\hat\eta_A)A^{-1}$ is symmetric. From Legendre's identities~\eqref{eqLegendre} it follows that
		\[
		\eta_A^TA = A^T\eta_A,
		\]
		which implies \[
		\left(A^T\right)^{-1}\eta_A^T = \eta_A A^{-1},
		\]
		so $(\eta_A A^{-1})^T = \eta_AA^{-1}$. By the same reasoning the matrix $\hat \eta_AA^{-1}$ is also symmetric. Thus, $(\eta_A - 2\hat\eta_A)A^{-1}$ is symmetric.
	\end{proof}
	\begin{lemma}\label{lemMainTransformation}
		Let $f = pqr$ be an admissible polynomial and assume that $p,q,r \in \mathfrak P_2$ and $\Delta(p,q,r) \ne 0$. Assume that a quadruple $(\mathfrak A_1,\mathfrak A_2, \mathfrak A_3,\mathfrak A_4)$ represents the Richelot isogeny $\mathscr R_{p,q,r}$. Then the following statements hold.
		
		\begin{enumerate}[label=(\roman*)]
			\item\label{MainTransfi} There exists a complex number $c \ne 0$ such that the matrices $\mathfrak A_j$ have the form \begin{equation}\label{eqMatrixLastRows}
				\begin{gathered}
					\mathfrak A_1 =  \begin{pmatrix}
						* & * & * & c\\ * & * & * & 0\\  * & * & * & 0\\ c & 0 & 0 & 0
					\end{pmatrix},\;\;\mathfrak A_2 = \begin{pmatrix}
						* & * & * & 0\\ * & * & * & c\\  * & * & * & 0\\ 0 & c & 0 & 0
					\end{pmatrix},\;\;
					\mathfrak A_3 = \begin{pmatrix}
						* & * & * & 0\\ * & * & * & 0\\  * & * & * & c\\ 0 & 0 & c & 0
					\end{pmatrix},\\\mathfrak A_4 = \begin{pmatrix}
						* & * & * & -c\mathfrak H^{p,q,r}_{11}\\ * & * & * & -c\mathfrak H^{p,q,r}_{12}\\  * & * & * & c\mathfrak H^{p,q,r}_{22}\\ -c\mathfrak H^{p,q,r}_{11} & -c\mathfrak H^{p,q,r}_{12} & c\mathfrak H^{p,q,r}_{22} & 2c
					\end{pmatrix},
				\end{gathered}
			\end{equation}
			where $*$ denotes any complex number and
			\[
			\mathfrak H^{p,q,r} = \begin{pmatrix}
				\mathfrak H_{11}^{p,q,r} & \mathfrak H_{12}^{p,q,r} \\ 
				\mathfrak H_{12}^{p,q,r} & \mathfrak H_{22}^{p,q,r}
			\end{pmatrix}
			\]
			is defined in Lemma~\ref{lemEtaTransformation}.
			\item\label{MainTransfii} For all $z \in \CC^2$ the equality
			\[
			\mathscr S_f(z) = \frac{\exp\left(z^T \mathfrak H^{p,q,r}z\right)}{2c} \begin{pmatrix}
				\left(\mathscr S_{\hat f}(z)\right)^T \mathfrak A_1\mathscr S_{\hat f}(z) \\
				\left(\mathscr S_{\hat f}(z)\right)^T \mathfrak A_2\mathscr S_{\hat f}(z) \\
				\left(\mathscr S_{\hat f}(z)\right)^T \mathfrak A_3\mathscr S_{\hat f}(z) \\
				\left(\mathscr S_{\hat f}(z)\right)^T \mathfrak A_4\mathscr S_{\hat f}(z)
			\end{pmatrix}
			\]
			holds, where $c$ is the number from~\ref{MainTransfi}.
		\end{enumerate}
	\end{lemma}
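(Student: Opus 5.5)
The plan is to compare two holomorphic maps $\CC^2\to\CC^4$: $\mathscr S_f(z)$ and the vector $Q(z)$ with components $\mathscr S_{\hat f}(z)^T\mathfrak A_j\mathscr S_{\hat f}(z)$. By Definition~\ref{defRichelotRepresentation}, and since $\mathscr R_{p,q,r}$ is induced by the identity of $\CC^2$, these two vectors are proportional at every $z$. Both are nowhere zero: $\mathscr S_f$ by the remark after its definition, and $Q$ by the nonvanishing requirement in Definition~\ref{defRichelotRepresentation}. Hence $\mathscr S_f(z)=\lambda(z)Q(z)$ with $\lambda$ holomorphic and nowhere zero; locally $\lambda$ is a ratio of corresponding nonvanishing components. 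The work is to pin down $\lambda$ and then read off the entries of the $\mathfrak A_j$ from Taylor expansions.

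\emph{Step 1: the automorphy factor of $\lambda$.} For $w\in\Per_{\hat f}\subset\Per_f$, property~\eqref{eqSpaceDef} for $\hat f$ shows that $Q(z+w)=\exp[4\eta^{\hat f}(w)^T(z+w/2)]Q(z)$. Property~\eqref{eqSpaceDef} for $f$ gives the factor $\exp[2\eta^f(w)^T(z+w/2)]$ for $\mathscr S_f$. By Lemma~\ref{lemEtaTransformation}, $2\eta^f(w)=4\eta^{\hat f}(w)+2\mathfrak H w$ with $\mathfrak H$ symmetric. Therefore $\mu(z)=\lambda(z)\exp(-z^T\mathfrak H z)$ satisfies
\[
\mu(z+w)=\mu(z)\exp\bigl[2w^T\mathfrak H(z+w/2)-(z+w)^T\mathfrak H(z+w)+z^T\mathfrak H z\bigr]=\mu(z),
\]
so $\mu$ is $\Per_{\hat f}$-periodic. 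A holomorphic function on the compact torus $\Jac_{\hat f}$ is constant, so $\lambda(z)=\kappa\exp(z^T\mathfrak H z)$ with $\kappa\ne0$. This is the step I expect to carry the conceptual weight. It is where the symmetry of $\mathfrak H$ and the identity from Lemma~\ref{lemEtaTransformation} are essential, and where one must check carefully that the exponents cancel exactly.

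\emph{Step 2: Taylor comparison.} Write $\mathscr S_{\hat f}(z)=e_4+v(z)+\bar o(z^2)$, where $e_4=(0,0,0,1)^T$ and, by~\eqref{eqTaylorExpansions}, $v(z)=(z_1^2,\,2z_1z_2,\,-z_2^2,\,0)^T$. Then, modulo $\bar o(z^2)$,
\[
\mathscr S_{\hat f}^T\mathfrak A_j\mathscr S_{\hat f}=(\mathfrak A_j)_{44}+2\,e_4^T\mathfrak A_j v(z).
\]
Multiplying by $\kappa(1+z^T\mathfrak H z)$ and matching with $\mathscr S_f(z)=(z_1^2,\,2z_1z_2,\,-z_2^2,\,1+\bar o(z^2))^T$ gives two kinds of constraints. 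The constant terms give $\kappa(\mathfrak A_j)_{44}=\delta_{j4}$. The quadratic terms give, for $j\le3$, that $2\kappa\,e_4^T\mathfrak A_jv(z)$ equals the $j$-th component of $v(z)$; for $j=4$ they give $2\kappa\,e_4^T\mathfrak A_4v(z)+z^T\mathfrak H z=0$.

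\emph{Step 3: reading off the entries.} Since $z_1^2$, $2z_1z_2$, $-z_2^2$ are linearly independent quadratic forms, the conditions for $j\le3$ force the last row of $\mathfrak A_j$ to be $(1/(2\kappa))e_j^T$. Put $c=1/(2\kappa)$. For $j=4$, write $z^T\mathfrak Hz=\mathfrak H_{11}z_1^2+\mathfrak H_{12}\cdot2z_1z_2+\mathfrak H_{22}z_2^2$ in the basis $(z_1^2,2z_1z_2,-z_2^2)$. This gives last-row entries $(-c\mathfrak H_{11},-c\mathfrak H_{12},c\mathfrak H_{22})$ and $(\mathfrak A_4)_{44}=1/\kappa=2c$. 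Symmetry of the $\mathfrak A_j$ fills in the last columns, which proves~\ref{MainTransfi}. Finally, $\lambda=\exp(z^T\mathfrak Hz)/(2c)$, which is exactly~\ref{MainTransfii}.
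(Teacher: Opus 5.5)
Your proposal is correct and follows essentially the same route as the paper. It writes $\mathscr S_f=\lambda Q$, derives the automorphy factor $\exp\bigl[2w^T\mathfrak H^{p,q,r}(z+w/2)\bigr]$ from the identity $\eta^f(w)=2\eta^{\hat f}(w)+\mathfrak H^{p,q,r}w$, uses Liouville on $\Jac_{\hat f}$ to get $\lambda=\kappa\exp(z^T\mathfrak H^{p,q,r}z)$, and then compares order-$2$ Taylor terms. The only difference is that you carry out explicitly the coefficient matching (and the holomorphy of $\lambda$) that the paper leaves to the reader, and your computation of the last rows and of $c=1/(2\kappa)$ is accurate.
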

	\begin{proof}
		From Definition~\ref{defRichelotRepresentation} it follows that for all $z \in \CC^2$ there exists a number $\phi(z) \ne 0$ such that \[
		\mathscr S_f(z) = \phi(z)\begin{pmatrix}
			\left(\mathscr S_{\hat f}(z)\right)^T \mathfrak A_1\mathscr S_{\hat f}(z) \\
			\left(\mathscr S_{\hat f}(z)\right)^T \mathfrak A_2\mathscr S_{\hat f}(z) \\
			\left(\mathscr S_{\hat f}(z)\right)^T \mathfrak A_3\mathscr S_{\hat f}(z) \\
			\left(\mathscr S_{\hat f}(z)\right)^T \mathfrak A_4\mathscr S_{\hat f}(z)
		\end{pmatrix}.
		\]
		Obviously, $\phi$ is a holomorphic function and by comparing quasiperiodicity properties~\eqref{eqSpaceDef} of functions from the spaces $\mathfrak S_f$ and $\mathfrak S_{\hat f}$ we obtain that
		\[
			\phi(z + w) = \exp\left[\left(2\eta^f(w) - 4\eta^{\hat f}(w)\right)^T\left(z + \frac{w}{2}\right)\right]\phi(z) = \exp\left[2w^T\mathfrak H^{p,q,r}\left(z + \frac{w}{2}\right)\right]\phi(z)
		\]
		for all $w \in \Per_{\hat f}$. It follows that the function \[
		z \mapsto \exp[-z^T\mathfrak H^{p,q,r}z]\phi(z)
		\]
		is $\Per_{\hat f}$-periodic and, therefore, constant. Thus, we proved that for some constant $\alpha \ne 0$ the equality
		\[
		\mathscr S_f(z) = \alpha  \exp\left(z^T\mathfrak H^{p,q,r}z\right)\begin{pmatrix}
			\left(\mathscr S_{\hat f}(z)\right)^T \mathfrak A_1\mathscr S_{\hat f}(z) \\
			\left(\mathscr S_{\hat f}(z)\right)^T \mathfrak A_2\mathscr S_{\hat f}(z) \\
			\left(\mathscr S_{\hat f}(z)\right)^T \mathfrak A_3\mathscr S_{\hat f}(z) \\
			\left(\mathscr S_{\hat f}(z)\right)^T \mathfrak A_4\mathscr S_{\hat f}(z)
		\end{pmatrix}
		\]
		holds. Finally, by substituting the expansions~\eqref{eqTaylorExpansions} into this equality and calculating all terms of order $\le 2$ we get~\eqref{eqMatrixLastRows} with $c = 1/2\alpha$.
	\end{proof}
	
	Using the restrictions on matrices $\mathfrak A_j$ from Lemma~\ref{lemMainTransformation}~\ref{MainTransfi} we find an important relation between matrices $\mathfrak B$ and $\hat{\mathfrak B}$ from Lemma~\ref{lemMainKummerLemma}. This relation will allow us to overcome the indeterminacy of the description of these matrices given in Lemma~\ref{lemMainKummerLemma}, as the conditions~\ref{mainKummerii} and~\ref{MainTransfii} determine $\mathfrak B$ and $\hat{\mathfrak B}$ only up to the permutation of rows and multiplication of the rows by arbitrary non-zero scalars (see remark that follows Lemma~\ref{lemMainKummerLemma}).
	
	\begin{lemma}\label{lemFirstThreeRows}
		Assume that $\mathfrak B$ and $\hat {\mathfrak B}$ are invertible $4\times 4$ matrices that satisfy the condition~\ref{mainKummeri} of Lemma~\ref{lemMainKummerLemma}. Moreover, let $\mathfrak b = \begin{pmatrix}
			\mathfrak b_1 & \mathfrak b_2 &\mathfrak b_3 &\mathfrak b_4
		\end{pmatrix}^T$ denote the last column of the matrix $\hat {\mathfrak B}$. Then all components of $\mathfrak b$ do not equal to zero and there exists a non-zero scalar $c$ such that the matrix
		\[
		c\mathfrak B^{-1} - \hat {\mathfrak B}^{-1}\mathrm{diag}( {\mathfrak b})^{-1}
		\]
		has non-zero entries only in the last row. 
	\end{lemma}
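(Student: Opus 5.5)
The plan is to build a representing quadruple for $\mathscr R_{p,q,r}$ directly from $\mathfrak B$ and $\hat{\mathfrak B}$, and then read off the constraints of Lemma~\ref{lemMainTransformation}~\ref{MainTransfi}.

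\textbf{Step 1: a representing quadruple.} Condition~\ref{mainKummeri} of Lemma~\ref{lemMainKummerLemma} says that, on the Kummer surface, $\mathscr K_f\circ\mathscr R_{p,q,r}$ coincides with the map induced by
\[
a\mapsto \mathfrak B^{-1}\,\mathrm{Sq}(\hat{\mathfrak B}a).
\]
Write $\beta_k^T$ for the $k$-th row of $\hat{\mathfrak B}$ and $\gamma_{jk}=(\mathfrak B^{-1})_{jk}$. The $j$-th component of this map is $\sum_k \gamma_{jk}(\beta_k^Ta)^2=a^T\mathfrak A_ja$, where
\[
\mathfrak A_j=\hat{\mathfrak B}^T\,\mathrm{diag}(\gamma_{j1},\dots,\gamma_{j4})\,\hat{\mathfrak B}.
\]
These matrices are symmetric. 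The quadratic map sends nonzero vectors to nonzero vectors: $\hat{\mathfrak B}$ is invertible, $\mathrm{Sq}$ of a nonzero vector is nonzero, and $\mathfrak B^{-1}$ is invertible. Hence $(\mathfrak A_1,\dots,\mathfrak A_4)$ represents $\mathscr R_{p,q,r}$ in the sense of Definition~\ref{defRichelotRepresentation}, and Lemma~\ref{lemMainTransformation}~\ref{MainTransfi} applies with some $c\neq 0$.

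\textbf{Step 2: translate the last-column constraints.} The last column of $\mathfrak A_j$ is
\[
\mathfrak A_je_4=\hat{\mathfrak B}^T\,\mathrm{diag}(\gamma_{j\cdot})\,\mathfrak b=\hat{\mathfrak B}^T\,\mathrm{diag}(\mathfrak b)\,(\gamma_{j\cdot})^T .
\]
Transposing, the $j$-th row of
\[
M:=\mathfrak B^{-1}\,\mathrm{diag}(\mathfrak b)\,\hat{\mathfrak B}
\]
is exactly $(\mathfrak A_je_4)^T$. By~\eqref{eqMatrixLastRows}, this gives:
\begin{enumerate}[label=(\alph*)]
\item rows $1,2,3$ of $M$ are $c\,e_1^T,c\,e_2^T,c\,e_3^T$;
\item the last row of $M$ ends with the entry $(\mathfrak A_4)_{44}=2c$.
\end{enumerate}
Thus $M=cI+e_4v^T$ for some vector $v$ with $v_4=c$.

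\textbf{Step 3: conclude.} Expanding along the first three rows gives $\det M=2c^4\neq 0$. Since $\det M=\det(\mathfrak B)^{-1}\det(\hat{\mathfrak B})\prod_k\mathfrak b_k$, every $\mathfrak b_k$ is nonzero. Now multiply $M=cI+e_4v^T$ on the right by $\hat{\mathfrak B}^{-1}\mathrm{diag}(\mathfrak b)^{-1}$:
\[
\mathfrak B^{-1}=c\,\hat{\mathfrak B}^{-1}\mathrm{diag}(\mathfrak b)^{-1}+e_4\,v^T\hat{\mathfrak B}^{-1}\mathrm{diag}(\mathfrak b)^{-1}.
\]
Dividing by $c$, the matrix $c^{-1}\mathfrak B^{-1}-\hat{\mathfrak B}^{-1}\mathrm{diag}(\mathfrak b)^{-1}$ is supported in its last row. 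This is the claim with the scalar $c^{-1}$.

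The main point needing care is Step 1. One must check that the quadruple built from $\mathfrak B,\hat{\mathfrak B}$ genuinely satisfies Definition~\ref{defRichelotRepresentation}, so that Lemma~\ref{lemMainTransformation} may be invoked. The equality in~\ref{mainKummeri} is projective, which is exactly what the definition requires. The remaining work is bookkeeping of transposes.
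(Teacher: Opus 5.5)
Your proof is correct and follows the paper's argument. Both build the representing quadruple from $a\mapsto\mathfrak B^{-1}\mathrm{Sq}(\hat{\mathfrak B}a)$, identify the last rows of the $\mathfrak A_j$ with the rows of $\mathfrak B^{-1}\mathrm{diag}(\mathfrak b)\hat{\mathfrak B}$, and compare these with the shape forced by Lemma~\ref{lemMainTransformation}~\ref{MainTransfi}. The differences are cosmetic: you deduce $\mathfrak b_k\neq 0$ from $\det M=2c^4\neq 0$ rather than from linear independence of the last rows, and you explicitly check the non-vanishing condition of Definition~\ref{defRichelotRepresentation}, which the paper calls ``clear''.
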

	\begin{proof}
		Consider the map $\mathfrak A$ defined by the formula
		\begin{equation}\label{eqMapA}
			\mathfrak A(a) = \mathfrak B^{-1}\mathrm{Sq(\hat {\mathfrak B}a)}.
		\end{equation}
		Clearly, all components of this map are quadratic forms in the entries of $a$, so for suitable symmetric $\mathfrak A_1,\mathfrak A_2,\mathfrak A_3,\mathfrak A_4 \in \CC^{4 \times 4}$ we have the identity
		\[
		\mathfrak A(a) = \begin{pmatrix}
			a^T \mathfrak A_1a \\ a^T \mathfrak A_2a \\ a^T\mathfrak A_3a \\ a^T\mathfrak A_4a
		\end{pmatrix}, \;\;a \in \CC^4.
		\]
		Clearly, property~\ref{mainKummeri} from Lemma~\ref{lemMainKummerLemma} implies that the matrices $\mathfrak A_j$, $j = 1,2,3,4$ represent the Richelot isogeny. Now we calculate the last rows of these matrices directly from~\eqref{eqMapA}. With the notation
		\[
		\mathfrak B^{-1} = \begin{pmatrix}
			\beta_{11} & \beta_{12} & \beta_{13} & \beta_{14} \\
			\beta_{21} & \beta_{22} & \beta_{23} & \beta_{24} \\
			\beta_{31} & \beta_{32} & \beta_{33} & \beta_{34} \\
			\beta_{41} & \beta_{42} & \beta_{43} & \beta_{44}
		\end{pmatrix},\;\;\hat {\mathfrak B} = \begin{pmatrix}
			b_1 \\ b_2 \\ b_3 \\ b_4
		\end{pmatrix},\;\;b_1,b_2,b_3,b_4 \in \CC^{1\times 4}
		\]
		it is easy to find that
		\begin{equation}\label{eqLastRows}
			\mathfrak A_j = \begin{pmatrix}
				* \\ *\\ * \\ \displaystyle\sum_{k = 1}^4\beta_{jk}\mathfrak b_kb_k 
			\end{pmatrix},
		\end{equation}
		where $*$ denotes arbitrary row of $4$ numbers. From Lemma~\ref{lemMainTransformation}~\ref{MainTransfi} it follows, in particular, that the last rows of $A_j$, $j = 1,2,3,4$ are linearly independent. In view of~\eqref{eqLastRows} this is possible only if all components of $\mathfrak b$ are not equal to zero. Moreover, it follows that there exists $c \ne 0$ such that $c\sum_{k = 1}^4\beta_{jk}\mathfrak b_kb_k$ is the $j$-th row of the identity matrix for $j = 1,2,3$. By definition this means that first three rows of the matrix
		\[
		c\mathfrak B^{-1}\mathrm{diag}(\mathfrak b)
		\]
		are precisely the first three rows of the matrix $\hat {\mathfrak B}^{-1}$.
	\end{proof}
	
	\section{Explicit expression for the Richelot isogeny}\label{secCalculations}
	In this section we calculate the Richelot isogeny using Lemmas~\ref{lemMainKummerLemma} and~\ref{lemFirstThreeRows}. The approach to this calculation is very direct, i.e. we calculate the matrices $\mathfrak B$ and $\hat{\mathfrak B}$ from Lemma~\ref{lemMainKummerLemma} using their properties~\ref{mainKummerii} and~\ref{mainKummeriii} modulo permutation of rows and multiplication of them by scalars. The mentioned indeterminacy can be eliminated using Lemma~\ref{lemFirstThreeRows}. Since some of the calculations below are very cumbersome we performed computer verifications of several formulas in Maple\footnote{The code can be found in \url{https://github.com/Matvey1/Genus2KleinianFunctions}.}.

	\subsection{Calculation of the rows of matrices \texorpdfstring{$\mathfrak B$}{mathfrak B} and \texorpdfstring{$\hat{\mathfrak B}$}{hat mathfrak B}}
	
	In this subsection we keep the notation from Subsection~\ref{subsecRichelotIsogeny}, namely, the objects $f$, $\hat f$, $p, q, r,$ $\hat p, \hat q, \hat r$, $N_p, N_q, N_r, N_0$, $\hat N_p, \hat N_q, \hat N_r, \hat N_0$.
	
	At first we calculate the rows of the matrix $\hat{\mathfrak B}$. For this it will be convenient to introduce a specific basis in $\CC^4$. Namely, let
	\begin{equation}\label{eqMatrixC}
		\mathfrak C_{p,q,r} = \begin{pmatrix}
			\hat p_2 & \hat q_2 & \hat r_2 & 0 \\
			-\hat p_1 & -\hat q_1 & -\hat r_1 & 0 \\
			-\hat p_0 & -\hat q_0 & -\hat r_0 & 0 \\
			\phi_{1}(p,q,r) &
			\phi_2(p,q,r) &
			\phi_3(p,q,r) & -\dfrac{1}{16 \Delta(p,q,r)}
		\end{pmatrix},
	\end{equation}
	where
	\[
	\begin{gathered}
		\phi_1(p,q,r) = -\frac{1}{16\Delta(p,q,r)}(\hat p_2^2 \hat q_0 \hat r_0 + \hat p_2 \hat p_0(\hat q_0 \hat r_2 + \hat q_1 \hat r_1 + \hat q_2 \hat r_0) + \hat p_0^2\hat q_2\hat r_2),\\
		\phi_2(p,q,r) = -\frac{1}{16\Delta(p,q,r)}(\hat q_2^2 \hat r_0 \hat p_0 + \hat q_2 \hat q_0(\hat r_0 \hat p_2 + \hat r_1 \hat p_1 + \hat r_2 \hat p_0) + \hat q_0^2\hat r_2\hat p_2),\\
		\phi_3(p,q,r) = -\frac{1}{16\Delta(p,q,r)}(\hat r_2^2 \hat p_0 \hat q_0 + \hat r_2 \hat r_0(\hat p_0 \hat q_2 + \hat p_1 \hat q_1 + \hat p_2 \hat q_0) + \hat r_0^2\hat p_2\hat q_2).
	\end{gathered}
	\]
	That is, columns of $\mathfrak C_{p,q,r}$ are coordinate representations of the nodes $\hat N_p$, $\hat N_q$, $\hat N_r$, $\hat N_0$ by~\eqref{eqNodeCoordinates}. It is easy to see that $\mathfrak C_{p,q,r}$ is invertible as $\Delta(\hat p, \hat q, \hat r) \ne 0$ by Proposition~\ref{propBasicProperties}~\ref{BasicPropertiesii}.
	
	\begin{lemma}\label{lemSymmetriesFormula}
		The matrices $\mathfrak X_{\hat N_p}$, $\mathfrak X_{\hat N_q}$, $\mathfrak X_{\hat N_r}$ can be calculated by the formulas
		\[
		\mathfrak X_{\hat N_p} = \mathfrak C_{p,q,r} \begin{pmatrix}
			0 & 0 & 0 & 1 \\
			0 & 0 & \Res(\hat p, \hat r) & 0 \\
			0 & \Res(\hat p,\hat q) & 0 & 0 \\
			\Res(\hat p, \hat r)\Res(\hat p,\hat q)& 0 & 0 & 0
		\end{pmatrix} \mathfrak C_{p,q,r}^{-1},
		\]
		\[
		\mathfrak X_{\hat N_q} = \mathfrak C_{p,q,r} \begin{pmatrix}
			0 & 0 & \Res(\hat q, \hat r) & 0 \\
			0 & 0 & 0 & 1 \\
			\Res(\hat q, \hat p) & 0 & 0 & 0 \\
			0 & \Res(\hat q, \hat p)\Res(\hat q,\hat r)& 0 & 0
		\end{pmatrix} \mathfrak C_{p,q,r}^{-1},
		\]
		\[
		\mathfrak X_{\hat N_r} = \mathfrak C_{p,q,r} \begin{pmatrix}
			0 & \Res(\hat r, \hat q) & 0 & 0 \\
			\Res(\hat r, \hat p) & 0 & 0 & 0 \\
			0 & 0 & 0 & 1 \\
			0 & 0 & \Res(\hat r, \hat p)\Res(\hat r, \hat q) & 0
		\end{pmatrix} \mathfrak C_{p,q,r}^{-1}
		\]
		modulo multiplication by non-zero scalars.
	\end{lemma}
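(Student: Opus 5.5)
We have to show that, in the basis formed by the columns of $\mathfrak C_{p,q,r}$ (the coordinate vectors of $\hat N_p,\hat N_q,\hat N_r,\hat N_0$), the translation $\mathfrak X_{\hat N_p}$ is the displayed monomial matrix, and similarly for $\hat N_q$ and $\hat N_r$. By the symmetry $(p,q,r)\mapsto(q,r,p)$ it suffices to treat $\mathfrak X_{\hat N_p}$.

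\emph{Step 1: the permutation pattern.} The translation by the two-torsion point $\hat w_p$ corresponding to $\hat N_p$ sends two-torsion points to two-torsion points. Since $\{0,\hat w_p,\hat w_q,\hat w_r\}$ is a subgroup of order four with $\hat w_p+\hat w_q=\hat w_r$ (Proposition~\ref{propRichProperties}~\ref{Richiv}), this translation swaps $\hat N_0\leftrightarrow\hat N_p$ and $\hat N_q\leftrightarrow\hat N_r$. Hence $\mathfrak X_{\hat N_p}$ maps each column of $\mathfrak C_{p,q,r}$ to a scalar multiple of the corresponding column, so $\mathfrak C_{p,q,r}^{-1}\mathfrak X_{\hat N_p}\mathfrak C_{p,q,r}$ is a monomial matrix with the pattern of the permutation $(14)(23)$. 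This is exactly the displayed shape, with unknown nonzero entries $x_{14},x_{23},x_{32},x_{41}$.

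\emph{Step 2: the ratios.} Because $\mathfrak X_{\hat N_p}^2$ induces the identity on $\CC\PP(3)$, it is a scalar, which gives $x_{14}x_{41}=x_{23}x_{32}$. After normalising $x_{14}=1$, two independent ratios remain to be determined, namely $x_{23}/x_{32}$ and one more relation. To obtain them I would use the tropes. Translation by $\hat w_p$ permutes tropes, and a trope $\mathfrak T_{\hat P}$ containing $\hat N_0$ and $\hat N_p$ (where $\hat P$ is a root of $\hat p$) must be mapped to a trope containing $\hat N_p$ and $\hat N_0$. These are the tropes $\mathfrak T_{\hat P_1},\mathfrak T_{\hat P_2}$ and the two triple tropes through both nodes. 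Take a further node, for instance $N_{\hat Q_1,\hat R_1}$, and compute its image under the translation: $\mathscr A(\hat Q_1+\hat R_1)+\mathscr A(\hat P_1+\hat P_2)$ is a point whose coordinates are given explicitly by~\eqref{eqNodeCoordinates}.

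Expressing both of these nodes in the $\mathfrak C$-basis yields linear equations for the $x_{ij}$. The resultants $\Res(\hat p,\hat q)$ and $\Res(\hat p,\hat r)$ should emerge through the factorisation~\eqref{eqDiscrAndResRootFormula}.

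\emph{Simplification and main obstacle.} A simpler route is to use Proposition~\ref{propBasicBasic}~\ref{BasicBasicv} and Proposition~\ref{propBasicProperties}~\ref{BasicPropertiesiv} to apply a Möbius change of variable, which induces a linear change on $\CC\PP(3)$ compatible with nodes. With it, one can normalise $\hat p=x$, so that its roots are $0$ and $\infty$, and take $\hat q,\hat r$ general. In that case $N_{\hat P_1\hat P_2}=(0:0:1:*)$, and the computation becomes short. The remaining risk is that the Kummer coordinates transform covariantly under Möbius maps only up to a known linear map, which must be checked. The main obstacle is this explicit identification of two further node or trope images in the $\mathfrak C$-basis. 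I would verify it symbolically, as the paper does in Maple, rather than by hand, using the formula for $\mathfrak X_N$ from~\cite[Section~3.2]{CasselsFlynn} to compute $\mathfrak X_{\hat N_p}\mathfrak C_{p,q,r}$ directly and then check proportionality to $\mathfrak C_{p,q,r}$ times the claimed matrix.
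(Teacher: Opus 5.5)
\textbf{There is a genuine gap.} Your Step~1 and the relation $x_{14}x_{41}=x_{23}x_{32}$ agree with the opening of the paper's proof. Your subgroup argument via Proposition~\ref{propRichProperties}~\ref{Richiv} is a fine substitute for the paper's appeal to theta characteristics. But the content of the lemma is the specific entries $\Res(\hat p,\hat r)$, $\Res(\hat p,\hat q)$ and their product, and you never derive them. ``The resultants should emerge'' and ``I would verify it symbolically'' describe a plan, not an argument.

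The trope step is also set up incorrectly. The node $\hat N_0=(0:0:0:1)$ lies on no triple trope, because the coefficient of $a_4$ in~\eqref{eqTrope3} is $4$. Hence the only tropes through both $\hat N_0$ and $\hat N_p$ are $\mathfrak T_{\hat P_1}$ and $\mathfrak T_{\hat P_2}$. Translation by $\hat w_p$ sends $N_{\hat P_1,\hat Q_1}\in\mathfrak T_{\hat P_1}$ to $N_{\hat P_2,\hat Q_1}\notin\mathfrak T_{\hat P_1}$, so it must swap these two tropes, and that is the constraint to exploit. In your M\"obius aside, $\hat p=\lambda x$ gives $\hat N_p=(0:1:0:0)$, not $(0:0:1:*)$. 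Moreover, the M\"obius covariance of $\mathscr K_{\hat f}$ is nowhere established.

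\textbf{First relation.} This is a short computation. Write $\hat p^{(j)}=\alpha_jx+\beta_j$ and $\rho_j(g)=g_2\beta_j^2-g_1\alpha_j\beta_j+g_0\alpha_j^2$, so that $\rho_1(g)\rho_2(g)=\Res(g,\hat p)$. Substitute $\mathfrak C_{p,q,r}a$ into~\eqref{eqTrope1}. The coefficient of $a_1$ is $\rho_j(\hat p)=0$ because $\hat p^{(j)}$ divides $\hat p$, and $a_4$ does not occur. So $\mathfrak T_{\hat P_j}$ becomes $\rho_j(\hat q)a_2+\rho_j(\hat r)a_3=0$. Requiring your monomial matrix to swap these two hyperplanes gives $x_{32}\Res(\hat p,\hat r)=x_{23}\Res(\hat p,\hat q)$.

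\textbf{Second relation.} One more independent relation is still needed. The paper obtains $x_{41}=\Res(\hat p,\hat q)\,x_{23}$ from the fact that $\mathfrak X_{\hat N_p}$ maps $\mathfrak T_{\hat Q_1}$ onto $\mathfrak T_{\hat P_1,\hat P_2,\hat Q_1}$, after writing the latter in $\mathfrak C$-coordinates (checked in Maple). Together with $x_{14}=1$ and $x_{14}x_{41}=x_{23}x_{32}$, this forces exactly the stated matrix. Your node pair $N_{\hat Q_1,\hat R_1}\mapsto N_{\hat Q_2,\hat R_2}$ could replace the second trope pair. That works only once you actually compute $\mathfrak C_{p,q,r}^{-1}$ applied to both nodes from~\eqref{eqNodeCoordinates} and show the resulting ratios are the claimed resultants. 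That computation is precisely the step your proposal leaves out.
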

	\begin{proof}
		We prove only the formula for $\mathfrak X_{N_p}$ as the other matrices are obtained in the same way. Let $\hat w_p$, $\hat w_q$, and $\hat w_r$ denote the points of order $2$ that correspond to the nodes $\hat N_p$, $\hat N_q$ and $\hat N_r$. From the arithmetic of theta characteristics described in~\cite[Proposition~6.1]{mumfordII} it follows that \[\hat w_p + \hat w_p = 0,\;\;\hat w_p + \hat w_q = \hat w_r,\;\;\hat w_p + \hat w_r = \hat w_q.\;\;\] From the definitions of $\mathfrak C_{p,q,r}$ and $\mathfrak X_{\hat N_p}$ it follows that there exist non-zero scalars $\alpha$, $\beta$, $\gamma$, and $\delta$ such that
		\[
		\mathfrak X_{\hat N_p} = \mathfrak C_{p,q,r} \begin{pmatrix}
			0 & 0 & 0 & \alpha \\
			0 & 0 & \beta & 0 \\
			0 & \gamma & 0 & 0 \\
			\delta & 0 & 0 & 0
		\end{pmatrix} \mathfrak C_{p,q,r}^{-1}.
		\]
		From the fact that $\mathfrak X^2_{p,q,r}$ is proportional to the identity matrix we conclude that $\alpha\delta = \beta\gamma$. Other relations between the constants $\alpha$, $\beta$, $\gamma$, and $\delta$ can be obtained by observing the action of $\mathfrak X^2_{p,q,r}$ on the tropes. At first let $\hat P_1$ and $\hat P_2$ denote the Weierstrass points of $\mathcal X_{\hat f}$ that correspond to the roots of $\hat p$. Consider the decomposition $\hat p = \hat p^{(1)}\hat p^{(2)}$, where $\hat p^{(1)}, \hat p^{(2)} \in \mathfrak P_1$ and the $x$-coordinate of $P_j$ is the root of $p^{(j)}$, $j = 1,2$. Then the symmetry $\mathfrak X_{\hat N_p}$ exchanges the tropes $\mathfrak T_{\hat P_1}$ and $\mathfrak T_{\hat P_2}$. By a simple calculation from~\eqref{eqMatrixC} and~\eqref{eqTrope1} it can be verified that 
		\[
		\begin{gathered}
			\mathfrak C_{p,q,r}a \in \mathfrak T_{\hat P_1} \Leftrightarrow \Res\left(\hat q, \hat p^{(1)}\right)a_2 + \Res\left(\hat r, \hat p^{(1)}\right)a_3 = 0, \\
			\mathfrak C_{p,q,r}a \in \mathfrak T_{\hat P_2} \Leftrightarrow \Res\left(\hat q, \hat p^{(2)}\right)a_2 + \Res\left(\hat r, \hat p^{(2)}\right)a_3 = 0.
		\end{gathered}
		\]
		Since these hyperplanes have to be exchanged by $\mathfrak X_{\hat N_p}$, in view of the simple identity $\Res(g, p^{(1)})\Res(g, p^{(2)}) = \Res(g, p)$ that holds for all $g \in \mathfrak P_2$, we obtain the relation
		\[
		\gamma\Res(\hat p,\hat r) - \beta\Res(\hat p, \hat q) = 0.
		\]
		Now consider the Weierstrass points $\hat Q_1$ and $\hat Q_2$ that correspond to the decomposition $\hat q = \hat q^{(1)}\hat q^{(2)}$. Then the matrix $\mathfrak X_{\hat N_p}$ maps the trope $\mathfrak T_{\hat Q_1}$ to the trope $\mathfrak T_{\hat P_1, \hat P_2, \hat Q_1}$. From~\eqref{eqMatrixC},~\eqref{eqTrope1}, and~\eqref{eqTrope3} it can be established that 
		\[
		\mathfrak C_{p,q,r}a \in \mathfrak T_{\hat Q_1} \Leftrightarrow \Res\left(\hat p, \hat q^{(1)}\right)a_1 + \Res\left(\hat r, \hat q^{(1)}\right)a_3 = 0,\]
		\begin{equation}\label{eqTropeAfterCoordChange}
			\mathfrak C_{p,q,r}a \in \mathfrak T_{\hat P_1 \hat P_2 \hat Q_1} \Leftrightarrow \Res\left(\hat p, \hat q^{(2)}\right)\Res\left(r, q^{(1)}\right)a_2 + a_4 = 0.
		\end{equation}
		We note that the formula \eqref{eqTropeAfterCoordChange} was verified using Maple. From this we obtain that \[
		\Res(\hat p, \hat q)\beta - \delta = 0.
		\]
		Finally, since the matrix $\mathfrak X_{\hat N_p}$ is defined modulo multiplication by a constant we can set $\alpha = 1$. The relations that we have proved then imply
		\[
		\beta = \Res(\hat p, \hat r),\;\;\gamma = \Res(\hat p, \hat q),\;\;\delta = \Res(\hat p, \hat r)\Res(\hat p, \hat q).
		\]
		
	\end{proof}
	
	\begin{corollary}\label{corColumnsOfHatB}
		Let $\delta_p, \delta_q, \delta_r \in \CC$ be any numbers such that
		\[
		\delta_p^2 = \Discr(p), \;\;\delta_q^2 = \Discr(q),\;\;\delta_r^2 = \Discr(r).
		\]
		Then the hyperplane in $\CC\PP(3)$, whose homogeneous coordinates are given by the row
		\begin{equation}\label{eqHatHyperplane}
			\begin{pmatrix}
				\Delta(p,q,r)^2\delta_q\delta_r & \Delta(p,q,r)^2\delta_r\delta_p & 
				\Delta(p,q,r)^2\delta_p\delta_q & 1
			\end{pmatrix}\mathfrak C_{p,q,r}^{-1}
		\end{equation}
		is invariant with respect to symmetries $\mathfrak X_{\hat N_p}$, $\mathfrak X_{\hat N_q}$, $\mathfrak X_{\hat N_r}$.
	\end{corollary}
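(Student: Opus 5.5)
Work in the coordinates given by $\mathfrak C_{p,q,r}$. A hyperplane with row $h$ is invariant under a matrix $\mathfrak X$ (up to scalar) exactly when $h\mathfrak X = \lambda h$ for some $\lambda$. Write $\mathfrak X_{\hat N_p} = \mathfrak C_{p,q,r} M_p \mathfrak C_{p,q,r}^{-1}$, with $M_p$ the antidiagonal-type matrix from Lemma~\ref{lemSymmetriesFormula}, and similarly for $M_q$ and $M_r$. Put $h = u\,\mathfrak C_{p,q,r}^{-1}$, where $u = (u_1,u_2,u_3,u_4)$ is the row in \eqref{eqHatHyperplane}. Then $h\mathfrak X_{\hat N_p} = u M_p \mathfrak C_{p,q,r}^{-1}$, so it suffices to show that $u$ is a left eigenvector of each of $M_p$, $M_q$, $M_r$.

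First compute $uM_p$. Row $1$ of $M_p$ is $(0,0,0,1)$, row $2$ is $(0,0,\Res(\hat p,\hat r),0)$, row $3$ is $(0,\Res(\hat p,\hat q),0,0)$ and row $4$ is $(\Res(\hat p,\hat r)\Res(\hat p,\hat q),0,0,0)$. Hence
\[
uM_p = \bigl(u_4\Res(\hat p,\hat r)\Res(\hat p,\hat q),\; u_3\Res(\hat p,\hat q),\; u_2\Res(\hat p,\hat r),\; u_1\bigr).
\]
Now substitute Proposition~\ref{propBasicProperties}~\ref{BasicPropertiesiii}, which gives $\Res(\hat p,\hat q)=\Delta^2\Discr(r)=\Delta^2\delta_r^2$ and $\Res(\hat p,\hat r)=\Delta^2\delta_q^2$, where $\Delta=\Delta(p,q,r)$. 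Also use the symmetry of the resultant, $\Res(g,h)=\Res(h,g)$, which is immediate from \eqref{eqDiscrAndResDef}.

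With $u=(\Delta^2\delta_q\delta_r,\Delta^2\delta_r\delta_p,\Delta^2\delta_p\delta_q,1)$ the components become:
\[
u_4\Delta^4\delta_q^2\delta_r^2 = \Delta^2\delta_q\delta_r\cdot u_1,
\]
\[
u_3\Delta^2\delta_r^2 = \Delta^4\delta_p\delta_q\delta_r^2 = \Delta^2\delta_q\delta_r\cdot u_2,
\]
\[
u_2\Delta^2\delta_q^2 = \Delta^2\delta_q\delta_r\cdot u_3,
\]
\[
u_1 = \Delta^2\delta_q\delta_r\cdot u_4.
\]
Thus $uM_p=\Delta^2\delta_q\delta_r\,u$.

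The cases of $M_q$ and $M_r$ follow by the cyclic symmetry $p\to q\to r$ of both the matrices and $u$. In the same way one finds $uM_q=\Delta^2\delta_r\delta_p\,u$ and $uM_r=\Delta^2\delta_p\delta_q\,u$, using $\Res(\hat q,\hat r)=\Delta^2\delta_p^2$ together with the other two identities.

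The eigenvalues may vanish, for instance if $\delta_q=0$. This is harmless: $h\mathfrak X=0$ is impossible because $\mathfrak X$ is invertible ($\mathfrak X^2$ is a nonzero multiple of $I$) and $h\neq0$, since $u_4=1$. Admissibility also makes all $\Discr\neq 0$ anyway.

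The only real work is the bookkeeping in matching the entries of $M_q$ and $M_r$ with the right resultants, together with checking the symmetry $\Res(g,h)=\Res(h,g)$ in \eqref{eqDiscrAndResDef}. There is no conceptual obstacle.
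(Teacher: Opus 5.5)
Your proof is correct and follows the paper's argument. You conjugate by $\mathfrak C_{p,q,r}$, substitute Proposition~\ref{propBasicProperties}~\ref{BasicPropertiesiii} into the matrices of Lemma~\ref{lemSymmetriesFormula}, and check that the row is a left eigenvector with eigenvalue $\Delta(p,q,r)^2\delta_q\delta_r$, and likewise for the cyclic analogues; your explicit use of the symmetry $\Res(g,h)=\Res(h,g)$ for the $\hat q$ and $\hat r$ cases fills in a step the paper leaves implicit.
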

	\begin{proof}
		From Proposition~\ref{propBasicProperties}~\ref{BasicPropertiesiii} and Lemma~\ref{lemSymmetriesFormula} it is evident that
		\[
		\mathfrak X_{\hat N_p} = \mathfrak C_{p,q,r} \begin{pmatrix}
			0 & 0 & 0 & 1 \\
			0 & 0 & \Delta(p,q,r)^2\delta_q^2 & 0 \\
			0 & \Delta(p,q,r)^2\delta_r^2 & 0 & 0 \\
			\Delta(p,q,r)^4 \delta_q^2\delta_r^2 & 0 & 0 & 0
		\end{pmatrix} \mathfrak C_{p,q,r}^{-1}.
		\]
		Now multiplying this matrix by the row~\eqref{eqHatHyperplane} yields
		\begin{multline*}
			\begin{pmatrix}
				\Delta(p,q,r)^2\delta_q\delta_r & \Delta(p,q,r)^2\delta_r\delta_p & 
				\Delta(p,q,r)^2\delta_p\delta_q & 1
			\end{pmatrix}\mathfrak C_{p,q,r}^{-1} \mathfrak X_{\hat N_p} = \\\Delta(p,q,r)^2\delta_q\delta_r\begin{pmatrix}
				\Delta(p,q,r)^2\delta_q\delta_r & \Delta(p,q,r)^2\delta_r\delta_p & 
				\Delta(p,q,r)^2\delta_p\delta_q & 1
			\end{pmatrix}\mathfrak C_{p,q,r}^{-1}.
		\end{multline*}
		Thus we have proved the invariance with respect to $\mathfrak X_{\hat N_p}$. Similar calculations show the invariance with respect to $\mathfrak X_{N_q}$ and $\mathfrak X_{N_r}$.
	\end{proof}
	
	That is,~\eqref{eqHatHyperplane} gives a formula for the rows of the matrix $\hat {\mathfrak B}$ from Lemma~\ref{lemMainKummerLemma} (indeed by changing the signs of the numbers $\delta_p,\delta_q,\delta_r$ we can obtain four linearly independent rows by~\eqref{eqHatHyperplane}). It remains to perform a similar calculation for the rows of the matrix $\mathfrak B$ from Lemma~\ref{lemMainKummerLemma}. However, it will be more convenient to calculate the columns of $\mathfrak B^{-1}$ instead. Clearly, each of the columns of $\mathfrak B^{-1}$ is a vector that belongs to the intersection of three out of four tropes from the property~\ref{mainKummeriii} of Lemma~\ref{lemMainKummerLemma}.
	
	To formulate the result of this calculation we introduce another matrix
	\begin{equation}\label{eqMatrixD}
		\mathfrak D_{p,q,r} = \begin{pmatrix}
			\hat p_2 & \hat q_2 & \hat r_2 & 0 \\
			-\hat p_1 & -\hat q_1 & -\hat r_1 & 0 \\
			-\hat p_0 & -\hat q_0 & -\hat r_0 & 0 \\
			\dfrac{\phi_1(p,q,r)}{8} &
			\dfrac{\phi_2(p,q,r)}{8} &
			\dfrac{\phi_3(p,q,r)}{8} & -\dfrac{\Delta(p,q,r)}{8}
		\end{pmatrix},
	\end{equation}
	where
	\[
	\begin{gathered}
		\phi_1(p,q,r) = (\Delta(p,q,r) - p_1\hat p_1)q_1r_1,\;\;\phi_2(p,q,r) = (\Delta(p,q,r) - q_1\hat q_1)r_1p_1,\\ \phi_3(p,q,r) = (\Delta(p,q,r) - r_1\hat r_1)p_1q_1.
	\end{gathered}
	\]
	It is easy to see that this matrix is invertible (again, since $\Delta(p,q,r) \ne 0$). Moreover, for $g,h \in \mathfrak P_1$ we introduce $\mathfrak r(g,h)$ as the determinant of the matrix
	\[
	\begin{pmatrix}
		g_0 & h_0 \\
		g_1 & h_1
	\end{pmatrix}.
	\]
	That is, $\mathfrak r(g,h)$ is the resultant of the degree $1$ polynomials $g,h$. It is straightforward to verify that $\mathfrak r(g,h)^2 = \Discr(gh)$ for all $g,h \in \mathfrak P_1$. With this notation we have the following result.
	
	\begin{lemma}\label{lemColumnsOfBinv}
		Let $\delta_p, \delta_q, \delta_r \in \CC$ be any numbers such that
		\[
		\delta_p^2 = \Discr(p), \;\;\delta_q^2 = \Discr(q),\;\;\delta_r^2 = \Discr(r).
		\]
		Then the vector
		\begin{equation}\label{eqVectorInThreeTropes}
			\mathfrak D_{p,q,r}\begin{pmatrix}
				\delta_p \\ \delta_q \\ \delta_r \\ \delta_p\delta_q\delta_r
			\end{pmatrix}
		\end{equation}
		belongs to the intersection of three out of four tropes described in the property~\ref{mainKummeriii} of Lemma~\ref{lemMainKummerLemma}.
	\end{lemma}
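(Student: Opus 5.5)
The plan is a direct verification. Fix the four tropes from Lemma~\ref{lemMainKummerLemma}~\ref{mainKummeriii}: each has the form $\mathfrak T_{P,Q,R}$, where $P,Q,R$ are Weierstrass points whose $x$-coordinates are roots of $p$, $q$, $r$ respectively. Write $p = p^{(1)}p^{(2)}$, $q = q^{(1)}q^{(2)}$, $r = r^{(1)}r^{(2)}$ with linear factors $p^{(j)},q^{(j)},r^{(j)} \in \mathfrak P_1$ (allowing constants for roots at infinity). Then the trope $\mathfrak T_{P_i,Q_j,R_k}$ is given by~\eqref{eqTrope3} with $g = p^{(i)}q^{(j)}r^{(k)}$ and $h = p^{(3-i)}q^{(3-j)}r^{(3-k)}$. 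Replacing $(i,j,k)$ by the complementary triple gives the same trope, so the four tropes can be indexed by $(1,1,1)$, $(1,2,2)$, $(2,1,2)$, $(2,2,1)$.

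The choice of signs of $\delta_p,\delta_q,\delta_r$ is tied to the factorization. Since $\mathfrak r(g,h)^2 = \Discr(gh)$, we may normalize $\delta_p = \mathfrak r(p^{(1)},p^{(2)})$, and similarly for $q$ and $r$. Swapping the two factors of $p$ flips the sign of $\delta_p$ and permutes the four tropes. It therefore suffices to treat one sign choice and show that the vector~\eqref{eqVectorInThreeTropes} lies on three specific tropes. I expect these to be the three tropes other than the one indexed by $(1,1,1)$. For each such trope, the task is to show that the linear form $\ell(a) = (g_2h_0 + g_0h_2)a_1 + (g_3h_0+g_0h_3)a_2 - (g_3h_1+g_1h_3)a_3 + 4a_4$ vanishes on $\mathfrak D_{p,q,r}(\delta_p,\delta_q,\delta_r,\delta_p\delta_q\delta_r)^T$.

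To organize the computation, first evaluate $\ell$ on the first three columns of $\mathfrak D_{p,q,r}$. Those columns are the node vectors of $\hat p,\hat q,\hat r$ in their first three coordinates, so the pairing has a resultant-type interpretation. This is analogous to~\eqref{eqTropeAfterCoordChange}, where the pairing of a trope with $\mathfrak C_{p,q,r}$ factored into resultants. Concretely, I would express $\ell(\mathfrak D e_1)$ as a polynomial in the coefficients of the six linear factors. Using $\hat p = [q,r]$, I would try to write it as a product of $\mathfrak r$'s, for instance something like $\mathfrak r(q^{(j)},r^{(k)})\,\mathfrak r(q^{(3-j)},r^{(3-k)})$ plus a correction from $\phi_1/8$. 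The identity to be checked then becomes a polynomial identity in the $12$ coefficients of the linear factors: the $\delta$'s are replaced by $\mathfrak r$'s, and $\Delta$ is a cubic expression in these coefficients. Because of the symmetry of the setup under permutations of $(p,q,r)$ and under Möbius transformations (Propositions~\ref{propBasicBasic}~\ref{BasicBasicv} and~\ref{propBasicProperties}~\ref{BasicPropertiesiv}), only one of the three tropes needs to be checked by hand; the other two follow by relabeling.

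The main obstacle is this final polynomial identity. It is of high degree, and the fourth row of $\mathfrak D_{p,q,r}$, with its rather unmotivated $\phi_j$, has to cancel the constant term exactly. To reduce the work, I would use Möbius invariance to normalize the roots, for example sending one root of each of $p$ and $q$ to $0$ and $\infty$. This has to be done with care, because the fourth coordinate of the Kummer embedding does not transform trivially under Möbius maps. If that route becomes unwieldy, I would fall back, as the paper does for~\eqref{eqTropeAfterCoordChange}, on a symbolic computer verification in Maple of the identity in the coefficients of the linear factors.
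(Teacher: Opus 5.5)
Your proposal is correct and is essentially the paper's proof. The paper also factors $p,q,r$ into linear pieces ordered so that $\delta_p=\mathfrak r(p^{(1)},p^{(2)})$, and similarly for $q$ and $r$. Your three tropes (those other than $\mathfrak T_{P_1,Q_1,R_1}$) are exactly its $\mathfrak T_{P_1,Q_1,R_2}\cap\mathfrak T_{P_1,Q_2,R_1}\cap\mathfrak T_{P_2,Q_1,R_1}$. Like you, it reduces by symmetry to the single trope $\mathfrak T_{P_2,Q_1,R_1}$ and checks the resulting polynomial identity in the coefficients of the linear factors in Maple, so the optional M\"obius normalization is not needed.
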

	\begin{proof}
		Consider the decompositions $p = p^{(1)}p^{(2)}$, $q = q^{(1)}q^{(2)}$, and $r = r^{(1)}r^{(2)}$ where all the factors belong to $\mathfrak P_1$. We choose the order of factors to satisfy the equalities
		\[
		\mathfrak r\left(p^{(1)}, p^{(2)}\right) = \delta_p,\;\;\mathfrak r\left(q^{(1)}, q^{(2)}\right) = \delta_q,\;\;\mathfrak r\left(r^{(1)}, r^{(2)}\right) = \delta_r.
		\]
		Finally consider all the Weierstrass points $P_1,P_2,Q_1,Q_2,R_1,R_2$ of $\mathcal X_f$ ordered in a way such that the $x$-coordinate of $P_j$ is the root of $p^{(j)}$, $j = 1,2$, etc. We claim that the vector~\eqref{eqVectorInThreeTropes} belongs to the intersection
		\[
		\mathfrak T_{P_1, Q_1, R_2} \cap \mathfrak T_{P_1, Q_2, R_1} \cap \mathfrak T_{P_2, Q_1, R_1}.
		\]
		Clearly, the symmetry of the formula~\eqref{eqVectorInThreeTropes} allows us to check only that 
		\[
		\mathfrak D_{p,q,r}\begin{pmatrix}
			\delta_p \\ \delta_q \\ \delta_r \\ \delta_p\delta_q\delta_r
		\end{pmatrix} \in \mathfrak T_{P_2, Q_1, R_1},
		\]
		which by~\eqref{eqTrope3} reduces to the equality
		\begin{equation}\label{eqTropeEqualityForMaple}
			\begin{pmatrix} g_2h_0 + g_0h_2 & g_3 h_0 + g_0 h_3 & -(g_3 h_1 + g_1 h_3) & 4\end{pmatrix}\; \mathfrak D_{p,q,r}\begin{pmatrix}
				\delta_p \\ \delta_q \\ \delta_r \\ \delta_p\delta_q\delta_r
			\end{pmatrix} = 0,
		\end{equation}
		where $g = p^{(2)}q^{(1)}r^{(1)}$ and $h = p^{(1)}q^{(2)}r^{(2)}$. This can be verified directly by substituting the definition~\eqref{eqMatrixD} of the matrix $\mathfrak D_{p,q,r}$ into~\eqref{eqTropeEqualityForMaple}, and observing that the result is a polynomial in terms of coefficients of $p^{(j)}$, $q^{(j)}$, and $r^{(j)}$. We have verified~\eqref{eqTropeEqualityForMaple} using Maple.
	\end{proof}  
	
	\subsection{Relation between Kleinian functions associated with Richelot isogenous curves}
	
	Now we are ready to formulate the final result of our calculations. For $p,q,r \in \mathfrak P_2$ we let $\mathscr T_{p,q,r}$ be the map from $\CC^4$ to itself defined by the formula
	\begin{equation}\label{eqTDef}
		\mathscr T_{p,q,r}(a) = \begin{pmatrix}
			2\Delta(p,q,r)^2a_1a_4 + 2\Delta(p,q,r)^4 \Discr(p)a_2a_3 \\
			2\Delta(p,q,r)^2a_2a_4 + 2\Delta(p,q,r)^4 \Discr(q)a_1a_3 \\
			2\Delta(p,q,r)^2a_3a_4 + 2\Delta(p,q,r)^4 \Discr(r)a_1a_2 \\
			\Delta(p,q,r)^4(\Discr(q)\Discr(r) a_1^2 + \Discr(p)\Discr(r) a_2^2 + \Discr(p)\Discr(q) a_3^2) + a_4^2
		\end{pmatrix}.
	\end{equation}
	\begin{theorem}\label{thMainKummerFormula}
		Assume that $f = pqr$ is admissible, where $p,q,r \in \mathfrak P_2$ and $\Delta(p,q,r) \ne 0$. Then the mapping 
		\begin{equation}\label{eqFormulaForR}
			a \mapsto \mathfrak D_{p,q,r}\mathscr T_{p,q,r}(\mathfrak C_{p,q,r}^{-1}a)
		\end{equation} 
		is a homogeneous of order $2$ map of $\CC^4$ onto itself that maps non-zero vectors to non-zero vectors. Moreover, the map $\mathfrak R$ induced by it on $\CC\PP^3$ makes the diagram~\eqref{eqMainPropertyR} commutative.
	\end{theorem}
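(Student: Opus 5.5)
The plan is to build explicit matrices $\hat{\mathfrak B}$ and $\mathfrak B$ satisfying Lemma~\ref{lemMainKummerLemma}, fix their scalings with Lemma~\ref{lemFirstThreeRows}, and then identify $\mathfrak B^{-1}\mathrm{Sq}(\hat{\mathfrak B}a)$ with the map~\eqref{eqFormulaForR}.

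\textbf{The matrix $\hat{\mathfrak B}$.} Fix square roots $\delta_p,\delta_q,\delta_r$ of $\Discr(p),\Discr(q),\Discr(r)$. Let $\epsilon=(\epsilon_1,\epsilon_2,\epsilon_3)$ range over the four sign vectors with $\epsilon_1\epsilon_2\epsilon_3=1$. Replacing $\delta_p,\delta_q,\delta_r$ by $\epsilon_1\delta_p,\epsilon_2\delta_q,\epsilon_3\delta_r$ in Corollary~\ref{corColumnsOfHatB} gives four invariant hyperplanes. Their rows are $(\Delta^2\epsilon_2\epsilon_3\delta_q\delta_r,\ \Delta^2\epsilon_3\epsilon_1\delta_r\delta_p,\ \Delta^2\epsilon_1\epsilon_2\delta_p\delta_q,\ 1)\,\mathfrak C_{p,q,r}^{-1}$.
\begin{itemize}
\item Since $\epsilon_2\epsilon_3=\epsilon_1$, the $4\times4$ matrix $M$ formed by the left factors is $\mathrm{diag}(\Delta^2\delta_q\delta_r,\Delta^2\delta_r\delta_p,\Delta^2\delta_p\delta_q,1)$ composed with a $\pm1$ Hadamard-type matrix.
\item $M$ is invertible because $\Discr(p),\Discr(q),\Discr(r)\neq0$ ($f$ has no multiple roots). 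Set $\hat{\mathfrak B}=M\mathfrak C_{p,q,r}^{-1}$.
\item By the Remark after Lemma~\ref{lemMainKummerLemma}, this choice satisfies~\ref{mainKummerii}, and some $\mathfrak B$ satisfying~\ref{mainKummeri} and~\ref{mainKummeriii} exists.
\end{itemize}

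\textbf{The matrix $\mathfrak B^{-1}$.} By~\ref{mainKummeriii}, each column of $\mathfrak B^{-1}$ lies in three of the four tropes avoiding $N_0,N_p,N_q,N_r$. Lemma~\ref{lemColumnsOfBinv}, applied with the four sign patterns, gives such vectors.
\begin{itemize}
\item The column paired with row $\epsilon$ of $\hat{\mathfrak B}$ is $\mathfrak D_{p,q,r}(\epsilon_1\delta_p,\epsilon_2\delta_q,\epsilon_3\delta_r,\delta_p\delta_q\delta_r)^T$ times an unknown scalar $\lambda_\epsilon$.
\item I would check that the trope missed by this column is exactly the one indexed by the corresponding row. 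The tropes in Lemma~\ref{lemColumnsOfBinv} are indexed by the sign classes of the factorizations, so this amounts to matching the two labellings by sign patterns; it may require replacing $\epsilon$ by $-\epsilon$.
\item The scalars $\lambda_\epsilon$ are determined by Lemma~\ref{lemFirstThreeRows}: $c\mathfrak B^{-1}$ must agree with $\hat{\mathfrak B}^{-1}\mathrm{diag}(\mathfrak b)^{-1}$ in its first three rows. Here $\mathfrak b$ is the last column of $\hat{\mathfrak B}$; it equals $M\mathfrak C^{-1}e_4$, i.e.\ $-16\Delta$ times the last column of $M$, so every entry is the same constant.
\item Hence $\hat{\mathfrak B}^{-1}\mathrm{diag}(\mathfrak b)^{-1}$ is a multiple of $\mathfrak C_{p,q,r}M^{-1}$. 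The first three rows of $\mathfrak C$ and $\mathfrak D$ coincide, so matching with $\mathfrak D(\dots)$ forces all $\lambda_\epsilon$ to be equal up to the global constant.
\end{itemize}

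\textbf{Computing the composite.} We need $\mathfrak B^{-1}\mathrm{Sq}(M\mathfrak C^{-1}a)$. Write $u=\mathfrak C^{-1}a$. Then
\[
(Mu)_\epsilon=\Delta^2(\epsilon_1\delta_q\delta_r u_1+\epsilon_2\delta_r\delta_p u_2+\epsilon_3\delta_p\delta_q u_3)+u_4 .
\]
Applying $\mathfrak B^{-1}=\lambda\,\mathfrak D\,N$, where $N$ has columns $(\epsilon_1\delta_p,\epsilon_2\delta_q,\epsilon_3\delta_r,\delta_p\delta_q\delta_r)^T$, requires the four sums $\sum_\epsilon \epsilon_i (Mu)_\epsilon^2$ and $\sum_\epsilon (Mu)_\epsilon^2$.
\begin{itemize}
\item By character orthogonality for the sign group, $\sum_\epsilon\epsilon_1(Mu)_\epsilon^2$ keeps only the cross terms $u_1u_4$ and $u_2u_3$. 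This gives $\delta_p$ times $8\Delta^2\delta_q\delta_r\,(u_1u_4+\Delta^2\Discr(p)u_2u_3)$, using $\delta_p^2=\Discr(p)$.
\item In the same way $\sum_\epsilon(Mu)_\epsilon^2$ gives $4\bigl(\Delta^4(\Discr(q)\Discr(r)u_1^2+\dots)+u_4^2\bigr)$.
\item After the common factor $4\delta_p\delta_q\delta_r$ is absorbed, these reproduce the four components of $\mathscr T_{p,q,r}(u)$. This identifies the composite with~\eqref{eqFormulaForR} up to a scalar, which is irrelevant projectively.
\end{itemize}

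\textbf{Nonvanishing, and the main obstacle.} The map sends nonzero vectors to nonzero vectors because $\mathfrak D$, $N$, $M$ and $\mathfrak C$ are invertible and $\mathrm{Sq}(v)=0$ only for $v=0$. Commutativity of~\eqref{eqMainPropertyR} is then property~\ref{mainKummeri}.

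The step I expect to be hardest is the trope/row matching together with the scalar normalization via Lemma~\ref{lemFirstThreeRows}. A wrong pairing of columns with rows, or an unnoticed sign, would alter the cross terms. I would pin the pairing down by checking, on one sign pattern, which trope in~\eqref{eqTrope3} fails to contain the column, using the Maple-verified identity~\eqref{eqTropeEqualityForMaple}.
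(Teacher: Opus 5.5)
Your proposal is correct and follows the paper's proof essentially step for step. You use the same $\hat{\mathfrak B}=\mathfrak U\mathfrak C_{p,q,r}^{-1}$, take the columns of $\mathfrak B^{-1}$ from Lemma~\ref{lemColumnsOfBinv} normalized via Lemma~\ref{lemFirstThreeRows}, and identify the result with $\mathscr T_{p,q,r}$; you carry out that last identification by hand via sign-character orthogonality, where the paper defers to Maple. The row/column pairing you flag as the hardest step is already forced by that same Lemma~\ref{lemFirstThreeRows} comparison, because the upper-left $3\times 3$ block of $\mathfrak C_{p,q,r}$ is invertible and the columns of the first three rows of $N$ are pairwise non-proportional, so no separate trope check is needed.
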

	\begin{proof}
		We let $\hat p, \hat q, \hat r$ keep their usual meaning (see Subsection~\ref{subsecRichelotIsogeny}). Consider arbitrary numbers $\delta_p$, $\delta_q$, and $\delta_r$ such that \[
		\delta_p^2 = \Discr(p), \;\;\delta_q^2 = \Discr(q),\;\;\delta_r^2 = \Discr(r).
		\]
		Now consider
		\[
		\mathfrak U = \begin{pmatrix}
			\Delta(p,q,r)^2\delta_q\delta_r & \Delta(p,q,r)^2\delta_p\delta_r & \Delta(p,q,r)^2\delta_p\delta_q & 1\\
			-\Delta(p,q,r)^2\delta_q\delta_r & -\Delta(p,q,r)^2\delta_p\delta_r & \Delta(p,q,r)^2\delta_p\delta_q & 1\\
			-\Delta(p,q,r)^2\delta_q\delta_r & \Delta(p,q,r)^2\delta_p\delta_r & -\Delta(p,q,r)^2\delta_p\delta_q & 1\\
			\Delta(p,q,r)^2\delta_q\delta_r & -\Delta(p,q,r)^2\delta_p\delta_r & -\Delta(p,q,r)^2\delta_p\delta_q & 1
		\end{pmatrix}.
		\]        
		From Corollary~\ref{corColumnsOfHatB} it follows that the matrix $\hat {\mathfrak B} = \mathfrak U \mathfrak C_{p,q,r}^{-1}$ satisfies condition~\ref{mainKummerii} of Lemma~\ref{lemMainKummerLemma}. Thus from the remark that follows this lemma there exists a matrix $\mathfrak B$ that satisfies the conditions~\ref{mainKummeri} and~\ref{mainKummeriii}. From Lemma~\ref{lemFirstThreeRows} it follows that the first three rows of $\mathfrak B^{-1}$ can be modulo multiplication by a scalar factor calculated as first three rows of $\hat {\mathfrak B}^{-1}$ (note that all entries of the last column of $\hat {\mathfrak B}$ have the same value). A straightforward calculation shows that 
		\[
		\hat {\mathfrak B}^{-1} = \begin{pmatrix}
			\begin{pmatrix}
				\hat p_2 & \hat q_2 & \hat r_2 \\
				-\hat p_1 & -\hat q_1 & -\hat r_1 \\
				-\hat p_0 & -\hat q_0 & -\hat r_0
			\end{pmatrix} \begin{pmatrix}
				\delta_p & -\delta_p & -\delta_p & \delta_p \\
				\delta_q & -\delta_q & \delta_q & -\delta_q \\
				\delta_r & \delta_r & -\delta_r & -\delta_r
			\end{pmatrix} \\
			*
		\end{pmatrix},
		\]
		where $*$ denotes arbitrary row of $4$ numbers. On the other hand, the columns of $\mathfrak B^{-1}$ have the form given in~\eqref{eqVectorInThreeTropes}. Thus, modulo a scalar factor we have $\mathfrak B^{-1} = \mathfrak D_{p,q,r}\mathfrak V$, where
		\[
		\mathfrak V = \begin{pmatrix}
			\delta_p & -\delta_p & -\delta_p & \delta_p \\
			\delta_q & -\delta_q & \delta_q & -\delta_q \\
			\delta_r & \delta_r & -\delta_r & -\delta_r \\
			\delta_p\delta_q\delta_r & \delta_p\delta_q\delta_r & \delta_p\delta_q\delta_r & \delta_p\delta_q\delta_r
		\end{pmatrix}.
		\]
		The proof is finished by a calculation that shows that the map $a \mapsto \mathfrak V\mathrm{Sq}(\mathfrak Ua)$ coincides with $\mathscr T_{p,q,r}$ defined in~\eqref{eqTDef} (we also have verified this in Maple).
	\end{proof}

	Now we finalize our calculation. That is, we find an explicit formula that expresses $\mathscr S_f(z)$ through $\mathscr S_{\hat f}(z)$. This amounts to finding the matrix $\mathfrak H_{p,q,r}$ defined in Lemma~\ref{lemEtaTransformation}.
	
	\begin{definition}\label{defMatrices}
		For $p,q,r \in \mathfrak P_2$ such that $pqr$ is admissible and $\Delta(p,q,r) \ne 0$ we let $4 \times 4$ symmetric matrices $\mathfrak A_{p,q,r}$, $\mathfrak A^{(22)}_{p,q,r}$, $\mathfrak A^{(12)}_{p,q,r}$ and $\mathfrak A^{(11)}_{p,q,r}$ satisfy the equality
		\[
		\begin{pmatrix}
			a^T\mathfrak A_{p,q,r}a \\ a^T \mathfrak A^{(22)}_{p,q,r} a \\ a^T \mathfrak A^{(12)}_{p,q,r} a \\ a^T \mathfrak A^{(11)}_{p,q,r}a 
		\end{pmatrix} = \mathfrak D_{p,q,r}\mathscr T_{p,q,r}(\mathfrak C^{-1}_{p,q,r}a)
		\]
		for all $a \in \CC^4$.
	\end{definition}
	
	By Theorem~\ref{thMainKummerFormula} matrices from Definition~\ref{defMatrices} represent the Richelot isogeny $\mathscr R_{p,q,r}$. An explicit formula for these matrices can be obtained directly from definition (i.e. formula~\eqref{eqFormulaForR}), but we shall not give it here, as it is very cumbersome. However, we wish to compute the last row (or column) in the matrix $\mathfrak A^{(11)}_{p,q,r}$, since we can find the matrix $\mathfrak H^{p,q,r}$ from it by Lemma~\ref{lemMainTransformation}.
	
	To give an explicit formula for the matrix $\mathfrak H^{p,q,r}$ we introduce
	\begin{equation}\label{eqMuDef}
		\mu_{jklm}(p,q,r) = \hat p_j p_k p_lq_mr_m + \hat q_j q_k q_lp_mr_m + \hat r_j r_k r_lp_mq_m,
	\end{equation}
	where indices $j,k,l,m$ vary over the set $\{0,1,2\}$, $p,q,r \in \mathfrak P_2$ and as usual $\hat p = [q,r]$, $\hat q = [r,p]$ and $\hat r = [p,q]$. With~\eqref{eqMuDef} the matrix $\mathfrak C_{p,q,r}^{-1}$ can be expressed by the formula
	\begin{equation}\label{eqMatrixCinv}
		\mathfrak C_{p,q,r}^{-1} = \begin{pmatrix}
			-\dfrac{p_0}{\Delta(p,q,r)} & -\dfrac{p_1}{2\Delta(p,q,r)} & \dfrac{p_2}{\Delta(p,q,r)} & 0 \\[0.8em]
			-\dfrac{q_0}{\Delta(p,q,r)} & -\dfrac{q_1}{2\Delta(p,q,r)} & \dfrac{q_2}{\Delta(p,q,r)} & 0 \\[0.8em]
			-\dfrac{r_0}{\Delta(p,q,r)} & -\dfrac{r_1}{2\Delta(p,q,r)} & \dfrac{r_2}{\Delta(p,q,r)} & 0 \\[0.8em]
			\psi_0(p,q,r) &
			\psi_1(p,q,r) &
			\psi_2(p,q,r) & -16\Delta(p,q,r)
		\end{pmatrix},
	\end{equation}
	where 
	\[
	\begin{split}
		\psi_0(p,q,r) = & \; 4\mu_{0002}(p,q,r) + \mu_{2110}(p,q,r) + \mu_{2001}(p,q,r), \\
		\psi_1(p,q,r) = &  -\frac{\Delta(p,q,r)}{2}p_1q_1r_1 - \mu_{1021}(p,q,r), \\
		\psi_2(p,q,r) = & -4\mu_{2220}(p,q,r) - \mu_{0112}(p,q,r) - \mu_{0221}(p,q,r).
	\end{split}   
	\]
	Now by collecting all the formulas together we get
	\begin{multline}\label{eqA11lastcolumn}
		\mathfrak A^{(11)}_{p,q,r} = \\ \begin{pmatrix}
			* & * & * & 2\Delta(p,q,r)^3(p_0q_1r_1 + p_1q_0r_1 + p_1q_1r_0) + 2\Delta(p,q,r)^2\psi_0(p,q,r)\\
			* & * & * & -2\Delta(p,q,r)^2\mu_{1021}(p,q,r)\\
			* & * & * & -2\Delta(p,q,r)^3(p_2q_1r_1 + p_1q_2r_1 + p_1q_1r_2) + 2\Delta(p,q,r)^2\psi_2(p,q,r)\\
			* & * & * & -32\Delta(p,q,r)^3\\
		\end{pmatrix}.
	\end{multline}
	Both formulas~\eqref{eqMatrixCinv} and~\eqref{eqA11lastcolumn} were verified in Maple.
	
	Combining~\eqref{eqA11lastcolumn} with Lemma~\ref{lemMainTransformation} we obtain the following result.
	\begin{theorem}\label{thMainTransformation}
		Assume that $f = pqr$ is admissible, where $p,q,r \in \mathfrak P_2$ and $\Delta(p,q,r) \ne 0$. Let $\hat f$ be the result of the Richelot construction associated with the decomposition $f = pqr$. Then the equality
		\begin{equation*}\label{eqSMainTransform}
			\mathscr S_f(z) = -\frac{\exp\left(z^T\mathfrak H^{p,q,r}z\right)}{32\Delta(p,q,r)^3}\begin{pmatrix}
				\left(\mathscr S_{\hat f}(z)\right)^T \mathfrak A_{p,q,r}\mathscr S_{\hat f}(z) \\
				\left(\mathscr S_{\hat f}(z)\right)^T \mathfrak A_{p,q,r}^{(22)}\mathscr S_{\hat f}(z) \\
				\left(\mathscr S_{\hat f}(z)\right)^T \mathfrak A_{p,q,r}^{(12)}\mathscr S_{\hat f}(z) \\
				\left(\mathscr S_{\hat f}(z)\right)^T \mathfrak A^{(11)}_{p,q,r}\mathscr S_{\hat f}(z)
			\end{pmatrix}
		\end{equation*} holds, where
		\begin{multline*}\label{eqChiFormula}
			8\mathfrak H^{p,q,r} = \\ \begin{pmatrix}
				p_0q_1r_1 + p_1q_0r_1 + p_1q_1r_0 + \dfrac{\psi_0(p,q,r)}{\Delta(p,q,r)} & -\dfrac{\mu_{1021}(p,q,r)}{\Delta(p,q,r)} \\
				-\dfrac{\mu_{1021}(p,q,r)}{\Delta(p,q,r)} & p_2q_1r_1 + p_1q_2r_1 + p_1q_1r_2 - \dfrac{\psi_2(p,q,r)}{\Delta(p,q,r)}
			\end{pmatrix}
		\end{multline*}
	\end{theorem}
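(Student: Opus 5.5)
The plan is to combine Theorem~\ref{thMainKummerFormula} with Lemma~\ref{lemMainTransformation}, using the explicit last column~\eqref{eqA11lastcolumn} of $\mathfrak A^{(11)}_{p,q,r}$ to pin down both the constant $c$ and the matrix $\mathfrak H^{p,q,r}$.

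First, by Theorem~\ref{thMainKummerFormula} the map~\eqref{eqFormulaForR} sends non-zero vectors to non-zero vectors and induces a map $\mathfrak R$ that makes diagram~\eqref{eqMainPropertyR} commute. By Definition~\ref{defMatrices}, the quadruple $(\mathfrak A_{p,q,r},\mathfrak A^{(22)}_{p,q,r},\mathfrak A^{(12)}_{p,q,r},\mathfrak A^{(11)}_{p,q,r})$ of symmetric matrices therefore represents the Richelot isogeny $\mathscr R_{p,q,r}$ in the sense of Definition~\ref{defRichelotRepresentation}. Lemma~\ref{lemMainTransformation} then applies to this quadruple.

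Lemma~\ref{lemMainTransformation}~\ref{MainTransfi} gives a nonzero $c$ such that the last row and column of $\mathfrak A^{(11)}_{p,q,r}$ equal
\[
\bigl(-c\mathfrak H^{p,q,r}_{11},\ -c\mathfrak H^{p,q,r}_{12},\ c\mathfrak H^{p,q,r}_{22},\ 2c\bigr).
\]
We compare this with~\eqref{eqA11lastcolumn}.

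\begin{itemize}
\item The $(4,4)$ entry gives $2c=-32\Delta(p,q,r)^3$, so $c=-16\Delta(p,q,r)^3$ and $1/(2c)=-1/(32\Delta(p,q,r)^3)$. This is exactly the prefactor in the theorem, so the displayed identity for $\mathscr S_f$ follows from Lemma~\ref{lemMainTransformation}~\ref{MainTransfii}.
\item The $(1,4)$ entry gives
\[
16\Delta^3\mathfrak H_{11}=2\Delta^3(p_0q_1r_1+p_1q_0r_1+p_1q_1r_0)+2\Delta^2\psi_0 .
\]
Dividing by $2\Delta^3$ yields the stated $(1,1)$ entry of $8\mathfrak H^{p,q,r}$.
\item The $(2,4)$ entry gives $16\Delta^3\mathfrak H_{12}=-2\Delta^2\mu_{1021}$, hence $8\mathfrak H_{12}=-\mu_{1021}/\Delta$.
\item The $(3,4)$ entry gives
\[
-16\Delta^3\mathfrak H_{22}=-2\Delta^3(p_2q_1r_1+p_1q_2r_1+p_1q_1r_2)+2\Delta^2\psi_2 ,
\]
hence $8\mathfrak H_{22}=p_2q_1r_1+p_1q_2r_1+p_1q_1r_2-\psi_2/\Delta$.
\end{itemize}

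Symmetry of $\mathfrak H^{p,q,r}$ comes from Lemma~\ref{lemEtaTransformation}, so the off-diagonal entries agree.

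The only real obstacle is the input~\eqref{eqA11lastcolumn} itself. Computing it means expanding $\mathfrak D_{p,q,r}\mathscr T_{p,q,r}(\mathfrak C^{-1}_{p,q,r}a)$ and reading off the coefficients of $a_ja_4$ in the fourth component. Since the fourth column of $\mathfrak C^{-1}_{p,q,r}$ in~\eqref{eqMatrixCinv} is $(0,0,0,-16\Delta)^T$, only the variable $b_4=(\mathfrak C^{-1}a)_4$ depends on $a_4$. The terms of $\mathscr T_{p,q,r}$ containing $b_4$ are $2\Delta^2b_ib_4$ for $i=1,2,3$ and $b_4^2$, and these are then combined with the fourth row of $\mathfrak D_{p,q,r}$. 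This bookkeeping, including the identities relating $\phi_i$ and $\psi_i$ to the $\mu$'s, is routine but lengthy, and I would check it symbolically, as the paper does with Maple.

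One point needs care: the cross terms $a_ja_4$ must be halved correctly when passing to the symmetric matrix entries. Getting this wrong would change the factor between $c$ and the entries of $\mathfrak H^{p,q,r}$.
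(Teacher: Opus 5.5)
Your proposal is correct and is essentially the paper's argument. Theorem~\ref{thMainKummerFormula} shows that the quadruple from Definition~\ref{defMatrices} represents $\mathscr R_{p,q,r}$. Matching the last column~\eqref{eqA11lastcolumn} against the form in Lemma~\ref{lemMainTransformation}~\ref{MainTransfi} then gives $c=-16\Delta(p,q,r)^3$ and the stated entries of $8\mathfrak H^{p,q,r}$, and the identity for $\mathscr S_f$ follows from Lemma~\ref{lemMainTransformation}~\ref{MainTransfii}.
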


    \printbibliography
\end{document}